\documentclass[12pt]{amsart}

\setlength{\textheight}{23cm}
\setlength{\textwidth}{16cm}
\setlength{\topmargin}{-0.8cm}
\setlength{\parskip}{0.3\baselineskip}
\hoffset=-1.4cm

\newtheorem{theorem}{Theorem}[section]
\newtheorem{lemma}[theorem]{Lemma}
\newtheorem{proposition}[theorem]{Proposition}

\newtheorem{remark}[theorem]{Remark}

\numberwithin{equation}{section}

\begin{document}
\baselineskip=15pt

\title{Universal vector bundle over the reals}

\author[I. Biswas]{Indranil Biswas}

\address{School of Mathematics, Tata Institute of Fundamental Research,
Homi Bhabha Road, Bombay 400005, India}

\email{indranil@math.tifr.res.in}

\author[J. Hurtubise]{Jacques Hurtubise}

\address{Department of Mathematics, McGill University, Burnside
Hall, 805 Sherbrooke St. W., Montreal, Qu\'e. H3A 2K6, Canada}

\email{jacques.hurtubise@mcgill.ca}

\subjclass[2000]{14F05, 14P99}

\keywords{Real curve, moduli space, real universal bundle}

\date{}

\begin{abstract}
Let $X_{\mathbb R}$ be a geometrically irreducible smooth projective
curve, defined over $\mathbb R$, such that $X_{\mathbb R}$ does not
have any real points. Let $X\,=\,X_{\mathbb R}\times_{\mathbb R}
\mathbb C$ be the complex curve. We show that
there is a universal real algebraic line
bundle over $X_{\mathbb R}\times \text{Pic}^d(X_{\mathbb R})$ if and
only if the Euler characteristic
$\chi(L)$ is odd for $L\,\in\, \text{Pic}^d(X_{\mathbb R})$.
There is a universal quaternionic algebraic line
bundle over $X\times \text{Pic}^d(X)$ if and only if the
degree $d$ is odd. (Quaternionic algebraic vector bundles are defined only
on a complexification.)

Take integers $r$ and $d$ such that $r\, \geq\, 2$, and $d$ is coprime
to $r$. Let ${\mathcal M}_{X_{\mathbb R}}(r,d)$
(respectively, ${\mathcal M}_{X}(r,d)$) be the moduli space of 
stable vector
bundles over $X_{\mathbb R}$ (respectively, $X$) of rank $r$ and degree 
$d$. We prove that there is
a universal real algebraic vector bundle over $X_{\mathbb R}\times 
{\mathcal M}_{X_{\mathbb R}}(r,d)$
if and only if $\chi(E)$ is odd for $E\,\in\, {\mathcal 
M}_{X_{\mathbb R}}(r,d)$. There is
a universal quaternionic vector bundle over $X\times {\mathcal M}_X(r,d)$
if and only if the degree $d$ is odd.

The cases where $X_{\mathbb R}$ is geometrically reducible or
$X_{\mathbb R}$ has real points are also investigated. 
\end{abstract}

\maketitle

\section{Introduction}\label{sec1}

Let $Y$ be a compact connected Riemann surface of genus $g$.
For any integers $r\, \geq\,2$ and $d$, let ${\mathcal M}_Y(r,d)$
be the moduli space of stable vector bundles over $Y$ rank $r$ and degree
$d$. A Poincar\'e bundle (also called a universal bundle) is an 
algebraic vector
bundle ${\mathcal E}\, \longrightarrow\, Y\times {\mathcal M}_Y(r,d)$ with the
property that for each point $z\, \in\, {\mathcal M}_Y(r,d)$, the vector
bundle ${\mathcal E}\vert_{Y\times \{z\}}$ on $Y$ is in the isomorphism class of
vector bundles defined by $z$. Similarly, a Poincar\'e
line bundle ${\mathcal L}\,\longrightarrow\, Y\times \text{Pic}^d(Y)$ is an
algebraic line bundle with the
property that for each point $z\, \in\, \text{Pic}^d(Y)$, the restriction
${\mathcal L}\vert_{Y\times \{z\}}$ is in the isomorphism class of
line bundles defined by $z$. 
Poincar\'e vector bundles are very useful objects;
for example, they are used in computing the cohomology of ${\mathcal M}_Y(r,d)$
\cite{MN}, \cite{JK}.

There is a Poincar\'e
line bundle on $Y\times \text{Pic}^d(Y)$ for each $d$.
If $g\,\geq\, 2$, it is known that there is a universal vector bundle over
$Y\times {\mathcal M}_Y(r,d)$ if and only if $d$ is coprime to $r$. A similar
statement holds for the moduli space of stable vector bundles
over $Y$ with fixed
determinant. Our aim here is to address similar questions for curves
defined over the real numbers.

Let $X_{\mathbb R}$ be an irreducible smooth projective curve
defined over $\mathbb R$. If $X_{\mathbb R}$ is geometrically reducible,
or if $X_{\mathbb R}$ has a real point, then it is straight--forward
to answer the question of existence of a universal bundle (see
Section \ref{s2.3} for the details). So assume that
\begin{enumerate}
\item $X_{\mathbb R}$ is geometrically irreducible, and

\item $X_{\mathbb R}$ does not have any real points.
\end{enumerate}

Let $X\, :=\, X_{\mathbb R}\times_{\mathbb R}
\mathbb C$ be the complex curve obtained by base change. The
conjugation of $\mathbb C$ gives an anti--holomorphic involution
$\sigma$ of $X$. Real algebraic vector bundles over
$X_{\mathbb R}$ are the same as complex algebraic vector bundles
$E\,\longrightarrow\, X$ together with an algebraic isomorphism
$\sigma_E\, :\, E\, \longrightarrow\,\sigma^*\overline{E}$ with
$\sigma_E\circ \sigma^*(\overline\sigma_E)\,=\,\text{Id}_E$. Using the
canonical $C^\infty$ isomorphism of $E$ with $\overline{E}$, an
isomorphism of $E$ with $\sigma^*\overline{E}$ is given by an
anti--holomorphic automorphism of the total space of $E$
lifting $\sigma$ which is
conjugate--linear on the fibers. The above condition
$\sigma_E\circ \sigma^*(\overline\sigma_E)\,=\,\text{Id}_E$ is
equivalent to the condition that this automorphism is an involution.

A quaternionic
vector bundle over $X$ is defined to be an algebraic vector bundle 
$E\,\longrightarrow
\,X$ equipped with an algebraic isomorphism
$\sigma_E\, :\, E\, \longrightarrow\,\sigma^*\overline{E}$ such that
$\sigma_E\circ\sigma^*(\overline{\sigma}_E)\,=\,-\text{Id}_E$.
Quaternionic vector bundles are defined only on the complexification
$X$ and not on $X_{\mathbb R}$. Since the automorphism $-\text{Id}_E$ acts
trivially on the projective bundle ${\mathbb P}(E)$ associated to $E$, a
quaternionic vector bundle on $X$ defines a real projective
bundle over $X_{\mathbb R}$.

These descriptions of
real and quaternionic vector bundles hold for any variety defined
over $\mathbb R$. Real algebraic universal vector bundles for $X_{\mathbb 
R}$ will be identified with the universal vector bundles for $X$
equipped with a lift of the anti--holomorphic involution
of the base; they will be called real universal vector bundles.

For a vector bundle $E$ on $X$ of rank $r$ degree $d$, define 
$\chi(E)\,:=\,
\dim H^0(X,\, E)- \dim H^1(X,\, E)$. By Riemann--Roch, this is expressed 
as $d- r(\text{genus}(X)-1)$.

It turns out that
contrary to the complex case, there are obstructions for the
existence of a real or quaternionic universal line bundle over
$X\times {\rm Pic}^d(X)$. More precisely, we prove the following:

\begin{theorem}\label{thm1}
There is a real universal line bundle over $X\times {\rm Pic}^d(X)$
if and only if $\chi(L)$ is odd for $L\, \in\, {\rm Pic}^d(X)$.

There is a quaternionic universal line bundle over
$X\times {\rm Pic}^d(X)$ if and only if $d$ is odd.
\end{theorem}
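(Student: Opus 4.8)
The plan is to recast both parts of the theorem as the vanishing, respectively the precise value, of a single Brauer class on $\mathrm{Pic}^d(X_{\mathbb R})$, and then to bound its order and identify its image in $\mathrm{Br}(\mathbb R)$.

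I would start from the elementary remark that if $\mathcal L$ is a holomorphic line bundle on a complex projective variety carrying an anti--holomorphic involution $\varphi$, and $\phi\colon\mathcal L\to\varphi^*\overline{\mathcal L}$ is any isomorphism, then $\phi\circ\varphi^*\overline\phi$ is a nonzero real scalar whose sign is unchanged under rescaling of $\phi$ (rescaling by $\lambda$ multiplies it by $|\lambda|^2$); so a line bundle with $\varphi$--invariant isomorphism class is canonically either \emph{real} (sign $+1$, i.e.\ it descends) or \emph{quaternionic} (sign $-1$), and never both. Applied with $\varphi=\sigma\times\iota$, $\iota(L)=\sigma^*\overline L$, this turns the two existence problems into Galois--descent problems for the torsor of Poincaré bundles under $p_2^*\mathrm{Pic}(\mathrm{Pic}^d(X))$. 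Since every closed point of $X_{\mathbb R}$ has degree $1$ or $2$ and there is no degree--$1$ point, $X_{\mathbb R}$ has a real line bundle $A$ of degree $2$; tensoring by $p_1^*A$ and using the real isomorphism $\mathrm{Pic}^d(X_{\mathbb R})\xrightarrow{\ \sim\ }\mathrm{Pic}^{d+2}(X_{\mathbb R})$ shows that both properties depend only on $d\bmod 2$, so I may assume $d\ge 2g-1$; then $\chi(L)=h^0(X,L)$, and $\mathrm{Sym}^d(X_{\mathbb R})\to\mathrm{Pic}^d(X_{\mathbb R})$ is a Severi--Brauer scheme with fibre $\mathbb P^{\,\chi-1}$, namely (canonically, hence over $\mathbb R$) the projectivization of the direct image of any Poincaré bundle.

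Let $\beta\in\mathrm{Br}(\mathrm{Pic}^d(X_{\mathbb R}))$ be the class of that Severi--Brauer scheme. I claim that a real universal line bundle exists if and only if $\beta=0$, and that a quaternionic one exists if and only if $\mathrm{Sym}^d(X_{\mathbb R})$ is the projectivization of a \emph{quaternionic} vector bundle of rank $\chi$ on $\mathrm{Pic}^d(X)$. Both equivalences are proved by descending along the two projections of $X_{\mathbb R}\times\mathrm{Sym}^d(X_{\mathbb R})$: the universal effective divisor $\mathcal D$ gives a real line bundle $\mathcal O(\mathcal D)$ which restricts to $\mathcal O_X(D)$ on each slice $X\times\{D\}$ and to $\mathcal O(1)$ on each $\mathbb P^{\,\chi-1}$--fibre of the Abel--Jacobi map; if $\mathrm{Sym}^d(X_{\mathbb R})=\mathbb P(W)$ with $W$ real (resp.\ quaternionic) then $\mathcal O(\mathcal D)\otimes\mathcal O_{\mathbb P(W)}(-1)$ is fibrewise trivial and descends to a real (resp.\ quaternionic) Poincaré bundle, and conversely the direct image of a real (resp.\ quaternionic) Poincaré bundle is a real (resp.\ quaternionic) such $W$ with $\mathbb P(W)=\mathrm{Sym}^d(X_{\mathbb R})$.

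It remains to analyse $\beta$. As a Poincaré bundle exists over $\mathbb C$, $\beta$ dies in $\mathrm{Br}(\mathrm{Pic}^d(X))$, and the kernel of $\mathrm{Br}(\mathrm{Pic}^d(X_{\mathbb R}))\to\mathrm{Br}(\mathrm{Pic}^d(X))$ is $2$--primary by the Hochschild--Serre sequence for $\mathrm{Gal}(\mathbb C/\mathbb R)$; since moreover $\beta$ has period dividing $\chi=\operatorname{rk}$, it vanishes whenever $\chi$ is odd, which gives the ``if'' half of the first statement (and then, by the second equivalence above, a quaternionic universal bundle also exists precisely when $\mathrm{Pic}^d(X_{\mathbb R})$ carries a quaternionic line bundle). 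The necessity of ``$d$ odd'' for the quaternionic case is easy: if $d$ is even then $[A^{\otimes(d/2)}]$ is a real point of $\mathrm{Pic}^d(X_{\mathbb R})$ over which the fibre of the Severi--Brauer scheme is $\mathbb P^{\,\chi-1}_{\mathbb R}$, so $\beta$ restricts to $0$ in $\mathrm{Br}(\mathbb R)$ there, whereas a quaternionic $W$ would force this restriction to be the nontrivial class; equivalently, restricting a quaternionic structure on a universal bundle to $X\times\{A^{\otimes(d/2)}\}$ would put a sign--$(-1)$ structure on the descended bundle $A^{\otimes(d/2)}$, which is impossible. The substantive step — the main obstacle — is to show that $\beta\ne0$ exactly when $\chi$ is even, and that $\beta$ becomes the quaternion class exactly when $d$ is odd. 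For this I would compute $\beta$ by restricting it to suitable low--dimensional real subfamilies of $\mathrm{Pic}^d(X_{\mathbb R})$ on which the Severi--Brauer scheme is explicit (in the extreme case $d=g+1$ it is a conic bundle), using Riemann--Roch, which ties the parity of $\chi$ to that of $d-g$, together with the hypothesis that $X_{\mathbb R}$ has no real points (which is exactly what makes the relevant local algebra a division algebra). The cleanest formulation is probably to identify $\beta$ with the cup product of the canonical principal polarization of $\mathrm{Pic}^d(X_{\mathbb R})$ with the class of this torsor in $H^1(\mathrm{Gal}(\mathbb C/\mathbb R),J[2])$ and to evaluate that pairing; everything else — the descent of the previous paragraph and the order and $2$--primary bounds — is formal, and over $\mathbb C$ the whole picture degenerates correctly, since there $\beta=0$ and a Poincaré bundle always exists.
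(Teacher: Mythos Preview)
Your Brauer--theoretic reformulation is a genuinely different route from the paper's, and the existence half for the real case is clean: the period of $\beta$ divides $\chi$ because the gerbe carries a twisted sheaf of rank $\chi$, and $\beta$ is $2$--torsion by Hochschild--Serre, so $\beta=0$ when $\chi$ is odd. The paper instead constructs an explicit real Poincar\'e bundle on $X\times\mathrm{Pic}^{g-2}(X)$ from the canonical theta divisor (identifying it as a determinant line bundle and running a Deligne--pairing computation), and then reaches the other degrees by tensoring with real or quaternionic line bundles on $X$. Your approach is more conceptual and would generalize more readily; the paper's is constructive.

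The gap, however, is exactly the ``substantive step'' you flag but do not carry out: you do not prove that $\beta\neq 0$ when $\chi$ is even, nor that a quaternionic universal bundle exists when $d$ is odd. The suggestions --- restrict to low--dimensional real subfamilies, or identify $\beta$ as a cup product with the polarization and ``evaluate that pairing'' --- are not executed, and the second is not visibly easier than the theorem itself. Even when $\chi$ is odd, your quaternionic sufficiency argument reduces to the existence of a quaternionic line bundle on $\mathrm{Pic}^d(X_{\mathbb R})$, which you never verify. For comparison, the paper's non--existence proofs are concrete and short: when $g$ is even, any real universal bundle in odd degree would differ from a known quaternionic one by $p_2^*\xi$ with $\xi$ quaternionic on $\mathrm{Pic}^1(X)$, impossible since $\mathrm{Pic}^1(X_{\mathbb R})$ has a real point; when $g$ is odd, one normalizes a putative real or quaternionic universal bundle in degree $0$ to be topologically trivial along the $\mathrm{Pic}^0$--direction, reads off a real morphism $X\to\mathrm{Pic}^0(X)^\vee$, and pulls back a theta divisor to obtain a real line bundle on $X$ of degree $g$ --- contradicting the absence of real points on $X_{\mathbb R}$. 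These, or something of equivalent force, are precisely the computations your outline defers.
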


See Propositions \ref{propexistence}, \ref{prop1} and
\ref{prop2} for a proof of Theorem \ref{thm1}.

Take any integer $r\, \geq\, 2$. Let $d$ be any integer
coprime to $r$ (as mentioned above, there is no universal
vector bundle, even over the complex numbers, if
$\text{g.c.d}(r,d)\,\not=\,1$). Let ${\mathcal M}_X(r,d)$
be the moduli space of stable vector bundles over $X$ of rank
$r$ and degree $d$.

\begin{theorem}\label{thm2}
There is a real universal vector bundle over
$X\times {\mathcal M}_X(r,d)$ if and only if
$\chi(E)$ is odd for $E\, \in\, {\mathcal M}_X(r,d)$.

There is a quaternionic universal vector bundle over
$X\times {\mathcal M}_X(r,d)$ if and only if the degree $d$
is odd.
\end{theorem}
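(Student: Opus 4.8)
We outline the proof; the plan is to follow the strategy of the proof of Theorem~\ref{thm1} (Propositions~\ref{propexistence}, \ref{prop1} and~\ref{prop2}), with $\mathrm{Pic}^d(X)$ replaced by $\mathcal{M}_X(r,d)$. What is new is that one must first establish rigidity of the universal bundle on $\mathcal{M}_X(r,d)$, and then reduce the rank-$r$ obstruction to the rank-one obstruction of Theorem~\ref{thm1}. Write $g=\mathrm{genus}(X)$, and let $p_1,p_2$ be the projections of $X\times\mathcal{M}_X(r,d)$ onto its factors. The involution $\sigma$ of $X$ induces an involution of $\mathcal{M}_X(r,d)$, namely $E\mapsto\sigma^*\overline E$ (again stable of rank $r$ and degree $d$, since $\sigma$ is an anti-holomorphic involution), and $\mathcal{M}_X(r,d)=\mathcal{M}_{X_{\mathbb R}}(r,d)\times_{\mathbb R}\mathbb C$. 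A real (respectively, quaternionic) universal vector bundle over $X_{\mathbb R}\times\mathcal{M}_{X_{\mathbb R}}(r,d)$ is a universal bundle $\mathcal E\to X\times\mathcal{M}_X(r,d)$ with an isomorphism $\sigma_{\mathcal E}:\mathcal E\to(\sigma\times\sigma)^*\overline{\mathcal E}$ satisfying $\sigma_{\mathcal E}\circ(\sigma\times\sigma)^*\overline{\sigma_{\mathcal E}}=\mathrm{Id}$ (respectively, $=-\mathrm{Id}$).

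Since $d$ is coprime to $r$, semistability coincides with stability, so $\mathcal{M}_X(r,d)$ is smooth, connected and \emph{projective}, and every stable bundle on $X$ is simple; hence any universal bundle has automorphism group $\mathbb C^*$, and any two universal bundles differ by tensoring with $p_2^*$ of a line bundle on $\mathcal{M}_X(r,d)$. Fix a universal bundle $\mathcal E$ over $\mathbb C$ (it exists as $\gcd(r,d)=1$). Then $(\sigma\times\sigma)^*\overline{\mathcal E}\cong\mathcal E\otimes p_2^*\Theta_0$ for a unique $\Theta_0\in\mathrm{Pic}(\mathcal{M}_X(r,d))$, and applying $(\sigma\times\sigma)^*\overline{(\,\cdot\,)}$ once more gives $\sigma^*\overline{\Theta_0}\cong\Theta_0^{-1}$. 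For any $N\in\mathrm{Pic}(\mathcal{M}_X(r,d))$ with $N\otimes(\sigma^*\overline N)^{-1}\cong\Theta_0$, the bundle $\mathcal E\otimes p_2^*N$ is $(\sigma\times\sigma)$-invariant; fixing an isomorphism $\psi$ to its conjugate pullback, $\psi\circ(\sigma\times\sigma)^*\overline\psi$ is a real scalar that can be normalised to $\pm1$, and the sign is an invariant by rigidity. A real universal bundle exists if and only if such an $N$ exists with the $+1$ sign realisable, and a quaternionic one if and only if such an $N$ exists with the $-1$ sign realisable.

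For the existence of $N$, I would use determinant-of-cohomology line bundles: for a line bundle $F$ and an effective divisor $D$ on $X$, put $L_F:=\det Rp_{2*}(\mathcal E\otimes p_1^*F)$ and $L_D:=\det(\mathcal E|_{D\times\mathcal{M}_X(r,d)})$, line bundles on $\mathcal{M}_X(r,d)$. Replacing $\mathcal E$ by $\mathcal E\otimes p_2^*N$ multiplies $L_F$ by $N^{\chi(E\otimes F)}$ and $L_D$ by $N^{r\deg D}$, and for $F,D$ defined over $\mathbb R$ one has $\sigma^*\overline{L_F}\cong L_F\otimes\Theta_0^{\chi(E\otimes F)}$ and $\sigma^*\overline{L_D}\cong L_D\otimes\Theta_0^{r\deg D}$. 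As $X_{\mathbb R}$ has no real point, every closed point of $X_{\mathbb R}$ has degree $2$, so every divisor and line bundle on $X_{\mathbb R}$ has even degree; hence the exponents $\chi(E\otimes F)=\chi(E)+r\deg F$ and $r\deg D$ realised by real $F,D$ range over $\chi(E)+2r\mathbb Z$ and $2r\mathbb Z$, with greatest common divisor $\gcd(\chi(E),2r)$, which equals $\gcd(\chi(E),2)$ because $\gcd(\chi(E),r)=\gcd(d,r)=1$. Thus, when $\chi(E)$ is odd, a suitable $\mathbb Z$-combination $\mathcal A$ of such $L_F$'s and $L_D$'s satisfies $\sigma^*\overline{\mathcal A}\cong\mathcal A\otimes\Theta_0$, so $N=\mathcal A^{-1}$ works and a $(\sigma\times\sigma)$-invariant universal bundle exists; when $\chi(E)$ is even, this construction only reaches exponent $2$, which is where the obstruction comes from.

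It remains to pin down the sign, and to prove genuine non-existence when the parities are wrong — this is the main obstacle, and I would handle it by reduction to Theorem~\ref{thm1} via the $\sigma$-equivariant determinant morphism $\det:\mathcal{M}_X(r,d)\to\mathrm{Pic}^d(X)$ and the line bundle $\bigwedge^r\mathcal E=(\mathrm{id}_X\times\det)^*\mathcal L\otimes p_2^*P$, for a Poincar\'e line bundle $\mathcal L$ on $X\times\mathrm{Pic}^d(X)$ and some $P$. If $r$ is even, then $\gcd(r,d)=1$ forces $d$ odd and hence $\chi(E)=d-r(g-1)$ odd, so the ``only if'' clauses are vacuous and only the ``if'' directions, established above, are needed. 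If $r$ is odd, then $\bigwedge^r$ of a descent datum keeps the same sign, and (modulo the twist $P$) $\det^*$ carries the rank-one obstruction class of Theorem~\ref{thm1} to the one for $\mathcal{M}_X(r,d)$; together with the congruence $\chi(E)\equiv\chi(L)\pmod 2$ valid for $r$ odd, where $\chi(L)=d-(g-1)$, this yields that a real universal bundle exists precisely when $\chi(E)$ is odd. For the quaternionic case one further notes that $\mathbb P(\mathcal E)$ always descends to a real projective bundle over $X_{\mathbb R}\times\mathcal{M}_{X_{\mathbb R}}(r,d)$ (its descent isomorphism being unique, hence an involution, since a family of stable bundles has no nontrivial projective automorphism), so that a quaternionic universal bundle is exactly a lift of this projective bundle carrying the nontrivial $\mathrm{Br}(\mathbb R)=\mathbb Z/2$ twist, and the same reduction identifies the obstruction with the parity of $d$. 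The most delicate point throughout is controlling the auxiliary twist $P$ — equivalently, determining whether $\mathcal{M}_X(r,d)$ carries a quaternionic line bundle, which requires the Dr\'ezet--Narasimhan description of $\mathrm{Pic}(\mathcal{M}_X(r,d))$ — and it is precisely there, as well as in the projectivity of $\mathcal{M}_X(r,d)$ and the coprimality computations above, that the hypothesis $\gcd(r,d)=1$ is essential.
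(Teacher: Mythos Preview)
Your framework is sound in spirit --- it is the natural obstruction/gerbe picture --- but the proposal has two genuine gaps, and it diverges substantially from the paper's argument.

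\textbf{Gap 1: the sign is never determined.} In your third paragraph you produce, when $\chi(E)$ is odd, a line bundle $N$ making $\mathcal E\otimes p_2^*N$ $(\sigma\times\sigma)$-invariant. But invariance only says the lift squares to $\pm\mathrm{Id}$; you then write ``only the `if' directions, established above, are needed'' when $r$ is even, although what was established is merely invariance, not realness. Worse, when $r$ is even both hypotheses ($\chi$ odd and $d$ odd) hold simultaneously, so you must produce \emph{both} a real and a quaternionic universal bundle, and for that you would need a quaternionic line bundle on $\mathcal M_X(r,d)$ (or on $X$ of degree $0$, which exists only for $g$ odd) to flip the sign. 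You never compute the sign of your particular $N$, and you yourself flag this (``the most delicate point\ldots determining whether $\mathcal M_X(r,d)$ carries a quaternionic line bundle'') without resolving it.

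\textbf{Gap 2: the non-existence reduction is only a sketch.} Your reduction via $\det:\mathcal M_X(r,d)\to\mathrm{Pic}^d(X)$ is the right idea when $r$ is odd, but the sentence ``modulo the twist $P$'' hides exactly the content of the proof: you must show that this twist cannot absorb the obstruction, i.e.\ that $P$ cannot be chosen quaternionic. Invoking Dr\'ezet--Narasimhan is plausible but you do not carry it out, and the paper does not use it.

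\textbf{Comparison with the paper.} The paper's argument is more concrete on both sides. For existence it goes back to the Quot scheme: the tautological bundle over a real Quot scheme is genuinely real, and when $\chi$ is odd one can choose the exponent $a$ in $ar+b\chi=-1$ even, allowing the point $x_0$ in the twist $(\bigwedge^r\mathcal E_{x_0})^{\otimes a}$ to be replaced by the symmetric combination of $x_0$ and $\sigma(x_0)$, so the descended bundle is real on the nose; quaternionic existence then follows by tensoring with a quaternionic line bundle on $X$ of degree $0$ or $1$. For non-existence the paper avoids Picard-group computations on the moduli space entirely: it builds a real cyclic \'etale cover $f:\widetilde X\to X$ of degree $r$ (using that $r$ is odd), chooses a quaternionic line bundle $\xi$ on $\widetilde X$, and pushes forward to get a quaternionic stable bundle $E=f_*\xi\in\mathcal M_X(r,d)$; restricting a hypothetical real universal bundle to $X\times\{E\}$ would make $E$ simultaneously real and quaternionic, a contradiction. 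A variant using the map $\mathrm{Pic}^1(X)\to\mathcal M_X(r,d)$, $L\mapsto E\otimes L$, handles the remaining case. This is both shorter and more elementary than the Dr\'ezet--Narasimhan route you propose, and it actually pins down the sign rather than leaving it as an unresolved Brauer class.
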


See Propositions \ref{prop-existence-2}, 
\ref{prop3}, \ref{prop4}, and 
\ref{propquat} for a proof of Theorem \ref{thm2}.

A similar result holds for moduli spaces of vector bundles
with fixed determinant; see Section \ref{sec4}.

We note that, somewhat ironically, one can have a universal real (or 
quaternionic) bundle on $X\times {\rm Pic}^d(X)$ or $X\times {\mathcal 
M}_X(r,d)$ without there being a corresponding real (or quaternionic) 
bundle over $X$. Indeed, the constraint for the existence of a real 
bundle on $X$ is that the degree $d$ be even; for a quaternionic bundle, 
$\chi(E)$ must be even (see \cite{BHH}); these can be contrasted with 
the conditions in the theorems above.

Instead of the involution $E\,\longrightarrow\,\sigma^*\overline{E}$,
one may also consider the involution of the moduli space
defined by $E\,\longrightarrow\,\sigma^*\overline{E}^*$. Note that
$\text{degree}(E)\,=\, 0$ if $E$ and $\sigma^*\overline{E}^*$ lie
in the same moduli space. Unitary flat vector bundles on nonorientable
surfaces are extensively studied; see \cite{HJ}, \cite{HL}. The
holomorphic vector bundles corresponding to these flat bundles
are fixed points of the involution $E\,\longrightarrow\,
\sigma^*\overline{E}^*$. Since there is no universal vector bundle
over $X\times {\mathcal M}_X(r,0)$ if $r\, \geq\, 2$, there is
no real or quaternionic universal vector bundle over
$X\times {\mathcal M}_X(r,0)$ for this involution.

\medskip
\noindent
\textbf{Acknowledgements.}\, The first--named author would like
to thank McGill University for hospitality.

\section{Real curves and universal bundles}

\subsection{Universal bundles}\label{s2.1}
Let $Y$ be a compact connected Riemann surface.
Let $J(Y)$ be the Jacobian of $Y$, which is an
abelian variety that parametrizes the
isomorphism classes of topologically trivial holomorphic
line bundles over $Y$. 
For any integer $d$, let $\text{Pic}^d(Y)$ denote the torsor
for $J(Y)$ that parametrizes the isomorphism classes of
holomorphic line bundles over $Y$ of degree $d$. A \textit{Poincar\'e
line bundle} over $Y\times \text{Pic}^d(Y)$ is a holomorphic
line bundle
$$
{\mathcal L}\, \longrightarrow\, Y\times \text{Pic}^d(Y)
$$
such that for each point $z\, \in\, \text{Pic}^d(Y)$ the line
bundle ${\mathcal L}\vert_{Y\times\{z\}}\,\longrightarrow\, Y$
is in the isomorphism class parametrized by $z$. There is
a Poincar\'e line bundle over $Y\times \text{Pic}^d(Y)$. If we
fix a point $y_0\, \in\, Y$, then there is a unique, up to an
isomorphism, Poincar\'e line bundle over $Y\times \text{Pic}^d(Y)$
which is trivial over $\{y_0\}\times \text{Pic}^d(Y)$;
see \cite[Ch. IV, \S~2]{ACGH} for the details. A Poincar\'e
line bundle is also called a universal line bundle.

Let $g$ be the genus of $Y$.

Fix an integer $r\, \geq\, 2$; let $d$ be any integer.
Let ${\mathcal M}_Y(r,d)$ denote the moduli space of stable
vector bundles over $Y$ of rank $r$ and degree $d$. This moduli
space, which was first constructed by Mumford, \cite{Mu}, is an
irreducible smooth quasi--projective variety over $\mathbb C$.
If $g\,=\, 0$, then ${\mathcal M}_Y(r,d)$ is an empty set;
this follows immediately from a theorem of Grothendieck which
says that any holomorphic vector bundle over ${\mathbb C}
{\mathbb P}^1$ splits into a direct sum of
holomorphic line bundles \cite{Gr}.
If $g\,=\, 1$, and $\text{g.c.d.}(r\, ,d)\, \not=\, 1$,
then ${\mathcal M}_Y(r,d)$ is an empty set; if
$\text{g.c.d.}(r\, ,d)\, =\, 1$, then, up to tensoring with
a line bundle, there is exactly one stable vector bundle
over $Y$ of rank $r$ and degree $d$ (see \cite{At1}).
If $g\, \geq\, 2$, then the dimension of
${\mathcal M}_Y(r,d)$ is $r^2(g-1)+1$.

When we consider the moduli space ${\mathcal M}_Y(r,d)$,
it will always be assumed that $g\, \geq\, 2$.

An algebraic vector bundle
$$
{\mathcal E}\, \longrightarrow\, Y\times {\mathcal M}_Y(r,d)
$$
is called \textit{universal} if for each point $z\, \in\,
{\mathcal M}_Y(r,d)$, the vector bundle ${\mathcal E}\vert_{Y\times
\{z\}}\,\longrightarrow\, Y$ is in the isomorphism class parametrized
by $z$.

There is a universal vector bundle over $Y\times {\mathcal 
M}_Y(r,d)$ if and only if $\text{g.c.d.}(r\, ,d)\, =\, 1$
\cite{Ra} (see also
\cite{Ne}). In Section \ref{s2.3} we recall a construction
of the universal vector bundle under the assumption that the degree
is coprime to the rank.

\subsection{Real curves; real and quaternionic bundles}

Let $X_{\mathbb R}$ be an irreducible smooth projective curve
defined over the field of
real numbers. The corresponding complex curve
$X\, :=\, X_{\mathbb R}\times_{\mathbb R}{\mathbb C}$ is a
compact Riemann surface. The automorphism of ${\mathbb C}$
defined by $z\, \longmapsto\, \overline{z}$ produces an
anti--holomorphic involution
\begin{equation}\label{p0}
\sigma\, :\, X\, \longrightarrow\, X\, .
\end{equation}

If $X_{\mathbb R}$ is geometrically irreducible, then $X$ is
connected. If $X_{\mathbb R}$ is not geometrically
irreducible, then $X$ is a disjoint union
\begin{equation}\label{p1}
X \,=\, S\cup \overline{S}\, ,
\end{equation}
where $S$ is a compact connected Riemann surface, and
$\overline{S}$ is the conjugate of $S$, meaning the smooth
manifold underlying $\overline{S}$ coincides with that of
$S$, while the almost complex structure of $\overline{S}$ is
$-J_S$, where $J_S$ is the almost complex structure of $S$.
So the identity map $S\,\longrightarrow\, \overline{S}$ is
anti--holomorphic, and this map coincides with $\sigma$.

Let $Z$ be a variety defined over $\mathbb R$; let
$Z_{\mathbb C}\, :=\, Z\times_{\mathbb R}\mathbb C$ be the
corresponding complex variety. Let
$$
\sigma_Z\, :\, Z_{\mathbb C}\, \longrightarrow\,
Z_{\mathbb C}
$$
be the anti--holomorphic involution given by the automorphism
of ${\mathbb C}$ defined by $z\, \longmapsto\, \overline{z}$.
Giving a real algebraic
vector bundle over $Z$ is equivalent to giving a complex
algebraic vector bundle $E\,\longrightarrow\,Z_{\mathbb C}$
together with an algebraic isomorphism of vector bundles
\begin{equation}\label{eta}
\eta\, :\, E\, \longrightarrow\,
\sigma^*_Z\overline{E}
\end{equation}
such that the composition
$$
E\, \stackrel{\eta}{\longrightarrow}\,\sigma^*_Z\overline{E}\,
\stackrel{\sigma^*_Z\overline{\eta}}{\longrightarrow}\,
\sigma^*_Z\overline{\sigma^*_Z\overline{E}}\, =\, E
$$
is the identity map of $E$. Note that this condition
means that there is an involution lifting $\sigma_Z$.

The above correspondence between real
algebraic vector bundles on $Z$ and complex algebraic vector bundles
on the complexification $Z_{\mathbb C}$
equipped with an involution lifting
$\sigma_Z$ will be used throughout without further clarification.

For convenience, sometimes a real
algebraic vector bundle over $Z$ will also be called
as a real algebraic vector bundles on $Z_{\mathbb C}$ (this happens
when we refer to both real and quaternionic vector bundles in a
sentence).

In the same vein, a \textit{quaternionic} vector bundle
on $Z_{\mathbb C}$ is a complex algebraic vector bundle
$$
E\,\longrightarrow\, Z_{\mathbb C}
$$
together with an algebraic isomorphism
$$
\eta\, :\, E\, \longrightarrow\, \sigma^*_Z\overline{E}
$$
such that the composition
$$
E\, \stackrel{\eta}{\longrightarrow}\,\sigma^*_Z\overline{E}\,
\stackrel{\sigma^*_Z\overline{\eta}}{\longrightarrow}\, 
\sigma^*_Z\overline{\sigma^*_Z\overline{E}}\, =\, E
$$
is multiplication by $-1$.

Note that the quaternionic vector bundles are defined only on the
complexification.

For any integer $d$, let $\text{Pic}^d(X_{\mathbb R})_{\mathbb C}
\,:=\, \text{Pic}^d(X_{\mathbb R})\times_{\mathbb R}
\mathbb C$ be the complexification of the
Picard variety of $X_{\mathbb R}$. So, if $X_{\mathbb R}$ is
geometrically irreducible, then
$$
\text{Pic}^d(X_{\mathbb R})_{\mathbb C}
\,=\, \text{Pic}^d(X)\, .
$$
If $X_{\mathbb R}$ is not
geometrically irreducible, and $d$ is odd, then
$\text{Pic}^d(X_{\mathbb R})_{\mathbb C}$ is the empty
set. If $X_{\mathbb R}$ is not
geometrically irreducible, and $d$ is even, then
$$
\text{Pic}^d(X_{\mathbb R})_{\mathbb C}
\,=\, \text{Pic}^{d/2}(S)\times \text{Pic}^{d/2}
(\overline{S})\, ,
$$
where $S$ and $\overline{S}$ are as in \eqref{p1}.

Let
\begin{equation}\label{p2}
\widetilde{\sigma}\, :\, \text{Pic}^d(X_{\mathbb R})_{\mathbb C}
\,\longrightarrow\,\text{Pic}^d(X_{\mathbb R})_{\mathbb C}
\end{equation}
be the anti--holomorphic diffeomorphism that sends any
holomorphic line bundle $L$ on $X$ to 
$\sigma^* \overline{L}$,
where $\sigma$ is the anti--holomorphic involution in \eqref{p0}.
This $\widetilde{\sigma}$ is clearly an involution, and it
coincides with the involution given by the automorphism
$z\,\longmapsto\,\overline{z}$ of $\mathbb C$.

Note that both quaternionic and real line bundles over $X$ of degree
$d$ are represented by real points of $\text{Pic}^d(X_{\mathbb 
R})_{\mathbb C}$.

Now we consider stable vector bundles of higher ranks.

If $X_{\mathbb R}$ is geometrically irreducible,
an algebraic vector bundle $E$ over $X_{\mathbb R}$ will
be called \textit{stable} if for every proper subbundle $F\,
\,\subset\, E$ of positive rank, the following inequality
holds:
$$
\text{degree}(F)/\text{rank}(F)\, <\,
\text{degree}(E)/\text{rank}(E)\, .
$$
So if $E$ is stable, then the corresponding vector bundle
$E\bigotimes_{\mathbb R}\mathbb C$ over $X$ is polystable. For
a real algebraic vector bundle $E\,\longrightarrow\, X_{\mathbb
R}$, from the uniqueness of the Harder--Narasimhan filtration
it follows that each term in the Harder--Narasimhan filtration
of $E\bigotimes_{\mathbb R} \mathbb C$ over $X_{\mathbb R}
\times_{\mathbb R} \mathbb C$ is real, meaning it is preserved
by the involution of $E\bigotimes_{\mathbb R} \mathbb C$.
We note that if $d$ is coprime to $r$, then $E$ is stable if and only
if $E\bigotimes_{\mathbb R}\mathbb C$ on $X$ is stable.

If $X_{\mathbb R}$ is not geometrically irreducible,
an algebraic vector bundle $E$ over $X_{\mathbb R}$ will
be called \textit{stable} if the restriction of
$E\bigotimes_{\mathbb R}\mathbb C$ to the component $S$ 
in \eqref{p1} is stable. Note that the restriction of 
$E\bigotimes_{\mathbb R}\mathbb C$ to $S$ is stable if and
only if the restriction of $E\bigotimes_{\mathbb R}
\mathbb C$ to $\overline{S}$ is stable.

Let $g\, :=\, H^1(X_{\mathbb R},\,{\mathcal O}_{X_{\mathbb R}})$
be the genus of $X_{\mathbb R}$. As mentioned before,
while considering higher rank bundles it is assumed that
$g\, \geq\,2$. We further assume that $g\, \geq\,4$ if
$X_{\mathbb R}$ is not geometrically irreducible. This
condition is equivalent to the condition that the genus
of $S$ in \eqref{p1} is at least two.

Let ${\mathcal M}_{X_{\mathbb R}}(r,d)$ be the moduli
space of stable vector bundles over $X_{\mathbb R}$
of rank $r$ and degree $d$. Let
\begin{equation}\label{wm}
\widetilde{\mathcal M}_X(r,d)\,=\,{\mathcal M}_{X_{\mathbb R}}
(r,d) \times_{\mathbb R}\mathbb C
\end{equation}
be the complexification. Assume that
$X_{\mathbb R}$ is geometrically irreducible.
Let ${\mathcal M}_X(r,d)$
be the moduli space of stable vector bundles over $X$
of rank $r$ and degree $d$. From the
construction of the moduli space it follows that
${\mathcal M}_X(r,d)$ is a Zariski open subset
of $\widetilde{\mathcal M}_X(r,d)$. But if $d$ is coprime
to $r$, then ${\mathcal M}_X(r,d)$ coincides with
$\widetilde{\mathcal M}_X(r,d)$.

If $X_{\mathbb R}$ is not
geometrically irreducible, and $d$ is odd, then
$\widetilde{\mathcal M}_X(r,d)$ is the empty set. 
If $X_{\mathbb R}$ is not
geometrically irreducible, and $d$ is even, then
$$
\widetilde{\mathcal M}_X(r,d)\,=\, {\mathcal M}_S(r,d/2)\times
{\mathcal M}_{\overline{S}}(r,d/2)\, ,
$$
where $S$ and $\overline{S}$ are as in \eqref{p1}; recall
that $\text{genus}(S)\,\geq\,2$. Let
\begin{equation}\label{p3}
\widetilde{\sigma}\, :\,\widetilde{\mathcal M}_X(r,d)
\,\longrightarrow\,\widetilde{\mathcal M}_X(r,d)
\end{equation}
be the anti--holomorphic diffeomorphism defined by
$E\, \longmapsto\, \sigma^* \overline{E}$, where
$\sigma$ is the involution in \eqref{p0}.
We note that $\widetilde{\sigma}$ coincides with the
involution of $\widetilde{\mathcal M}_X(r,d)$
given by the automorphism
$z\,\longmapsto\,\overline{z}$ of $\mathbb C$.

Note that both quaternionic and real stable vector
bundles over $X$ of rank $r$ and degree $d$
are represented by real points of
$\widetilde{\mathcal M}_X(r,d)$.

\subsection{Real and quaternionic universal bundles}\label{s2.3}

First assume that $X_{\mathbb R}$ is geometrically
irreducible.
A \textit{real universal line bundle} over $X\times 
\text{Pic}^d(X)$ is a universal line bundle
$$
{\mathcal L}\, \longrightarrow\, X\times \text{Pic}^d(X)
$$
equipped with a holomorphic
$$
\sigma_P\,:\, 
{\mathcal L}\,\longrightarrow\, 
(\sigma\times\widetilde{\sigma})^*\overline{\mathcal L}\, ,
$$
 where $\sigma$ and $\widetilde\sigma$ are the involutions
in \eqref{p0} and \eqref{p2} respectively, such that
$\sigma_P\circ \sigma_P\,=\, \text{Id}_{\mathcal L}$.
The definition of a quaternionic universal bundle is the same, except 
that $\sigma_P\circ \sigma_P\,=\, -\text{Id}_{\mathcal L}$.

Now assume that $X_{\mathbb R}$ is not geometrically
irreducible. Take $d$ to be even, say $d\,=\,2d_0$ (recall
that $\text{Pic}^d(X_{\mathbb R})_{\mathbb C}$ is the empty
set when $d$ is odd).

A \textit{real universal line bundle} over $X\times 
\text{Pic}^d(X_{\mathbb R})_{\mathbb C}$ is a universal
line bundle
$$
{\mathcal L}\, \longrightarrow\, 
S\times \text{Pic}^{d_0}(S)\, ,
$$
where $S$ is the Riemann surface in \eqref{p1}. Note that
the universal line bundle over $\overline{S}\times
\text{Pic}^{d_0}(\overline{S})$ is uniquely determined by
${\mathcal L}$; more precisely, it is the pullback $(\sigma
\times\widetilde{\sigma})^*\overline{\mathcal L}$.

Since a universal line bundle over $S\times \text{Pic}^{d_0}(S)$
exists, we conclude that there is a real
(or quaternionic; we just have to change sign of the
isomorphism over the component $S\times \text{Pic}^{d_0}(S)$
of $(S\times \text{Pic}^{d_0}(S))\bigcup (\overline{S}
\times \text{Pic}^{d_0}(\overline{S}))$) universal line bundle
over $X\times \text{Pic}^d(X_{\mathbb R})_{\mathbb C}$
if $X_{\mathbb R}$ is not geometrically irreducible.

The real points of $X_{\mathbb R}$ are the fixed points of
the involution $\sigma$ of $X$. Assume that $X_{\mathbb R}$ has
a real point $x_0$ (note that this implies that $X_{\mathbb R}$
is geometrically irreducible). As mentioned
in Section \ref{s2.1}, there is a unique, up to
an isomorphism, universal line bundle over
$X\times \text{Pic}^d(X)$ which is trivial on
$\{x_0\}\times\text{Pic}^d(X)$. Let ${\mathcal L}_d\,
\longrightarrow\, X\times \text{Pic}^d(X)$ be this universal
line bundle. Fix a trivialization of ${\mathcal L}_d$ over
$\{x_0\}\times\text{Pic}^d(X)$. From the uniqueness of
${\mathcal L}_d$ it follows that there is a unique involution
$$
{\mathcal L}_d\, \longrightarrow\, (\sigma\times
\widetilde{\sigma})^*\overline{{\mathcal L}_d}
$$
whose restriction to $\{x_0\}\times\text{Pic}^d(X)$ is
the conjugation of $\mathbb C$ with respect to the
chosen trivialization;
the involution $\widetilde{\sigma}$ is constructed in \eqref{p2}.
In other words, ${\mathcal L}_d$ gets a real structure.
Consequently, if $\sigma$
has a fixed point, there is a real universal line bundle
over $X\times \text{Pic}^d(X)$.

Now we consider moduli spaces of stable vector bundles of
higher rank.

First assume that $X_{\mathbb R}$ is not geometrically
irreducible. Take the integer $d$ to be even, say $d\,=\,2d_0$ (recall
that $\widetilde{\mathcal M}_X(r,d)$ in \eqref{wm} is the empty
set when $d$ is odd).

A \textit{real universal vector bundle} over $X\times
\widetilde{\mathcal M}_{X}(r,d)$ is a universal vector bundle
$$
{\mathcal E}\, \longrightarrow\, S\times{\mathcal M}_S(r,d_0)\, ,
$$
where $S$ in the Riemann surface in \eqref{p1}; as before,
$\mathcal E$ gives the universal vector bundle
$(\sigma\times \widetilde{\sigma})^*\overline{\mathcal E}$
over $\overline{S}\times{\mathcal M}_{\overline{S}}(r,d_0)$. 

As mentioned in Section \ref{s2.1}, there is a universal
vector bundle over $S\times{\mathcal M}_S(r,d_0)$
if and only if $d_0$ is coprime to $r$.
Hence, if $X_{\mathbb R}$ is not geometrically irreducible,
there is a real (or quaternionic) universal vector bundle over $X\times 
\widetilde{\mathcal M}_{X}(r,2d_0)$ if and only if
$d_0$ is coprime to $r$.

Now assume that $X_{\mathbb R}$ is geometrically
irreducible.
A \textit{real universal vector bundle} over $X\times
{\mathcal M}_{X}(r,d)$ is a universal vector bundle
$$
{\mathcal E}\, \longrightarrow\, X\times{\mathcal M}_X(r,d)
$$
equipped with an conjugate--linear involution
$$
{\mathcal E}\,\longrightarrow\, {\mathcal E}\, ,
$$
lifting $\sigma\times\widetilde{\sigma}$, where 
$\sigma$ and $\widetilde\sigma$ are the involutions
constructed in \eqref{p0} and \eqref{p3} respectively. The
square of this lift must be the identity; for a universal
quaternionic vector bundle, one has the same type
of lift, but now one asks that the square be minus 
the identity.

We will show that if $\sigma$ has a fixed point, and $d$
is coprime to $r$, then there is a 
real universal vector bundle over $X\times 
{\mathcal M}_{X}(r,d)$. For that we need to recall a
construction of the universal vector bundle over the complex
numbers under the assumption that $d$ is coprime to $r$.

As before, assume that $X_{\mathbb R}$ is geometrically
irreducible. Assume that $d$ is coprime to $r$. Set
\begin{equation}\label{chi}
\chi\, :=\, d+r(1-g)\, ,
\end{equation}
where $g$ is the genus of $X$.
Note that $\chi$ is the Euler characteristic of any vector
bundle on $X$ lying in ${\mathcal M}_X(r,d)$. Since $d$
is coprime to $r$, there are integers $a$ and $b$ such that
\begin{equation}\label{u1}
ar+b\chi\, =\, -1\, .
\end{equation}

The moduli space ${\mathcal M}_X(r,d)$
is a quotient of a Quot scheme $\mathcal Q$ (\cite{Mu}). Let
$$
{\mathcal E}\, \longrightarrow\, X\times \mathcal Q
$$
be the tautological universal vector bundle.
Fix a point $x_0$ in $X$. Let
$$
{\mathcal E}_{x_0}\, \longrightarrow\, \{x_0\}\times
{\mathcal Q}\, =\, \mathcal Q
$$
be the restriction. Let
$$
{\mathcal D}\, :=\, \det R^0p_{*} {\mathcal E}
\otimes(\det R^1p_{*} {\mathcal E})^*\,
\longrightarrow\,\mathcal Q
$$
be the determinant line bundle, where $p\, :\,
X\times \mathcal Q \, \longrightarrow\, \mathcal Q$
is the natural projection. Then the vector bundle
\begin{equation}\label{u2}
{\mathcal E}\otimes
(\bigwedge\nolimits^r p^*{\mathcal E}_{x_0})^{\otimes a}
\otimes p^*{\mathcal D}^{\otimes b}\, \longrightarrow\,
X\times \mathcal Q
\end{equation}
descends to a universal vector bundle over
$X\times {\mathcal M}_X(r,d)$, where $a$ and $b$ are
as in \eqref{u2}. From \eqref{u1} it follows that
the multiplication action of the nonzero scalars on
the vector bundle in \eqref{u2} is trivial.
We recall that the vector bundle
in \eqref{u2} descends to $X\times {\mathcal M}_X(r,d)$
because ${\mathbb C}^*$ acts trivially on it.

The Quot scheme $\mathcal Q$ can be so chosen that it has a
real structure; indeed one is looking at quotients of a fixed real 
bundle, which can be chosen to be real. Then the vector bundle
${\mathcal E}\, \longrightarrow\, X\times \mathcal Q$
has a real structure. If the point $x_0$ is fixed by
$\sigma$, the universal vector bundle over
$X\times {\mathcal M}_X(r,d)$ constructed above has
a real structure.

If $a$ in \eqref{u2} is an even integer, then the
vector bundle in \eqref{u2} can be replaced by
$$
{\mathcal E}\otimes
(\bigwedge\nolimits^r p^*{\mathcal E}_{x_0})^{\otimes a/2}
\otimes
(\bigwedge\nolimits^r p^*{\mathcal E}_{\sigma(x_0}))^{\otimes a/2}
\otimes {\mathcal D}^{\otimes b}\, \longrightarrow\,
X\times \mathcal Q
$$
which also descends to $X\times {\mathcal M}_X(r,d)$ as a
universal vector bundle for the same reason. If
the Quot scheme has a real structure, this descended vector
bundle also has a real structure.

So we have the following lemma.

\begin{lemma}\label{lem4}
Assume that $X_{\mathbb R}$ is geometrically
irreducible. Also, assume that $d$ is coprime to $r$.

If $\sigma$ has a fixed point, then there is a real universal
vector bundle over $X\times {\mathcal M}_X(r,d)$.

If the integer $a$ in \eqref{u1} is even, then
there is a real universal
vector bundle over $X\times {\mathcal M}_X(r,d)$.
\end{lemma}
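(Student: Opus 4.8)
The plan is to verify directly that each of the two constructions given just above the statement yields a vector bundle on $X \times \mathcal{Q}$ that (a) carries a real structure, and (b) still descends to the moduli space, so that the descended bundle inherits a real structure.

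First I would address the first assertion. The Quot scheme $\mathcal{Q}$ is built as a parameter space for quotients of a fixed bundle on $X$; choosing that fixed bundle to be the complexification of a real bundle over $X_{\mathbb{R}}$ (for instance $\mathcal{O}_X(-N)^{\oplus m}$ for suitable $N,m$, which is evidently real) makes $\mathcal{Q}$ a complex variety with an anti-holomorphic involution lifting nothing but itself, and the tautological subsheaf exact sequence on $X \times \mathcal{Q}$ is preserved by $\sigma \times (\text{involution of }\mathcal{Q})$. Hence $\mathcal{E} \to X \times \mathcal{Q}$ acquires a real structure, and so does each of its restrictions $\mathcal{E}_{x_0}$ and $\mathcal{E}_{\sigma(x_0)}$, as well as the determinant line bundle $\mathcal{D}$ (the formation of $R^0 p_* $ and $R^1 p_*$ commutes with the involution, and $\det$ and dualization preserve real structures). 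If $x_0$ is fixed by $\sigma$, then $\mathcal{E}_{x_0}$ is itself a real bundle on $\mathcal{Q}$, so the tensor product in \eqref{u2}, namely $\mathcal{E} \otimes (\bigwedge^r p^*\mathcal{E}_{x_0})^{\otimes a} \otimes p^*\mathcal{D}^{\otimes b}$, is a tensor product of real bundles and therefore real. Since \eqref{u1} guarantees that $\mathbb{C}^*$ acts trivially on it, it descends to $X \times \mathcal{M}_X(r,d)$; the real structure is $\mathbb{C}^*$-equivariant in the appropriate sense, so it descends along with the bundle, giving a real universal vector bundle.

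For the second assertion, drop the hypothesis that $x_0$ is fixed but assume $a$ is even. Now $\mathcal{E}_{x_0}$ need not be real, but the involution of $\mathcal{Q}$ identifies $\sigma^*\overline{\mathcal{E}_{x_0}}$ with $\mathcal{E}_{\sigma(x_0)}$; equivalently, $\bigwedge^r p^*\mathcal{E}_{x_0}$ and $\bigwedge^r p^*\mathcal{E}_{\sigma(x_0)}$ are exchanged (up to conjugation) by $\sigma \times (\text{involution})$. Therefore the modified bundle
$$
\mathcal{E} \otimes \bigl(\bigwedge\nolimits^r p^*\mathcal{E}_{x_0}\bigr)^{\otimes a/2} \otimes \bigl(\bigwedge\nolimits^r p^*\mathcal{E}_{\sigma(x_0)}\bigr)^{\otimes a/2} \otimes p^*\mathcal{D}^{\otimes b}
$$
is taken to its own conjugate-pullback by the involution: the two symmetric exterior-power factors get swapped, while $\mathcal{E}$ and $\mathcal{D}$ are real. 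This symmetrization is exactly what is needed to produce a well-defined isomorphism with the conjugate pullback whose square is the identity, i.e.\ a real structure. One checks that this bundle has the same numerical invariants governing the $\mathbb{C}^*$-action as \eqref{u2} — the exponents on the rank-$r$ determinant factors still sum to $a$ — so \eqref{u1} again forces the $\mathbb{C}^*$-action to be trivial and the bundle descends, carrying its real structure down to $X \times \mathcal{M}_X(r,d)$.

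The main obstacle, and the only point requiring genuine care rather than bookkeeping, is checking that the involution one writes down on the total space of the (modified) bundle over $X \times \mathcal{Q}$ is genuinely an \emph{involution} — that its square is $+\mathrm{Id}$ and not merely a scalar — and that this property is compatible with, and survives, the GIT quotient by $\mathrm{PGL}$ or $\mathrm{GL}$ that produces $\mathcal{M}_X(r,d)$. Concretely one must track how the real structure interacts with the choice of linearization and with the stabilizers $\mathbb{C}^*$ at each point; the evenness of $a$ in the second case is precisely the parity condition that makes the two swapped tensor factors contribute a sign $(-1)^a = +1$ rather than $-1$ to the square of the lifted involution. Once this sign computation is in hand, everything else is the routine functoriality of $\det$, $R^i p_*$, pullback, and tensor product with respect to real structures.
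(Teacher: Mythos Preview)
Your proposal is correct and follows essentially the same approach as the paper, which presents the argument in the paragraphs immediately preceding the lemma rather than in a separate proof environment. Your closing remark about a sign $(-1)^a$ is slightly misdirected --- the evenness of $a$ is needed only so that the exponent $a/2$ is an integer and the symmetrized factor $\bigl(\bigwedge^r p^*\mathcal{E}_{x_0}\bigr)^{a/2}\otimes\bigl(\bigwedge^r p^*\mathcal{E}_{\sigma(x_0)}\bigr)^{a/2}$ makes sense; once it does, the swap of the two factors squares to the identity automatically --- but this does not affect the correctness of the argument.
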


One could try the same approach for quaternionic structures: the 
universal bundle over the Quot scheme can be made quaternionic, but 
unfortunately the technique for having it descend to the moduli
space does not go through.

Henceforth, we will always make the following two assumptions:
\begin{enumerate}
\item $X_{\mathbb R}$ is geometrically
irreducible, and

\item $X_{\mathbb R}$ does not have any real points.
\end{enumerate}

We have seen above that the question of the existence of
a real universal bundle is settled if any of the above
two conditions fails.

\section{Line bundles}\label{sec2}

\subsection{Existence of universal bundles}

The following lemma gives an explicit universal line bundle.

\begin{lemma}\label{lem1}
There is a natural
real universal line bundle over $X\times {\rm Pic}^{g-2}(X)$.
\end{lemma}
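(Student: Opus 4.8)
The plan is to exploit the classical fact that on $X \times \mathrm{Pic}^{g-2}(X)$ there is a canonical divisor-theoretic object, namely the theta divisor, whose associated line bundle is intrinsic and hence automatically compatible with any symmetry of the data. Concretely, consider the locus
\[
\Theta \,:=\, \{(x,L) \,\in\, X \times \mathrm{Pic}^{g-2}(X) \,:\, h^0(X,\, L(x)) \,>\, 0\}\, ,
\]
or equivalently pull back the theta divisor on $\mathrm{Pic}^{g-1}(X)$ under the morphism $X \times \mathrm{Pic}^{g-2}(X) \to \mathrm{Pic}^{g-1}(X)$, $(x,L) \mapsto L(x)$. I would first verify that the line bundle $\mathcal{L} := \mathcal{O}(\Theta)$, suitably twisted, restricts on each slice $X \times \{L\}$ to a line bundle of degree $g-1$ lying in a prescribed class; one checks that the restriction of $\mathcal{O}(\Theta)$ to $X \times \{L\}$ is $\mathcal{O}_X(D)$ for an effective divisor $D$ of degree $g-1$ with $\mathcal{O}_X(D) \cong K_X \otimes L^{-1}$ (using that $h^0(L(x))>0$ for exactly $g-1$ points $x$, counted with multiplicity, by Riemann--Roch when $\deg L = g-2$). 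Tensoring with $p_X^* K_X^{-1}$ then produces a line bundle whose restriction to each slice is $L^{-1}$; replacing $\mathrm{Pic}^{g-2}$ by its dual, or equivalently replacing $L$ by $L^{-1}$ at the level of the Picard scheme, gives a genuine Poincaré line bundle of degree $g-2$. The point of working in degree $g-2$ is precisely that the relevant theta locus is a \emph{divisor} (codimension one), so the construction is purely algebraic and canonical — no choice of base point is needed.

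Next I would check the compatibility with the real structure. The anti-holomorphic involution $\sigma \times \widetilde{\sigma}$ on $X \times \mathrm{Pic}^{g-2}(X)$ is induced by the real structure of $X_{\mathbb{R}}$, and the morphism $(x,L) \mapsto K_X \otimes L^{-1}(x)$ to $\mathrm{Pic}^{g-1}(X)$ intertwines it with the corresponding involution there (note $K_X$ descends to $X_{\mathbb{R}}$, and $L \mapsto \sigma^*\overline{L}$ on the Picard side is exactly $\widetilde{\sigma}$). The theta divisor on $\mathrm{Pic}^{g-1}(X)$ is intrinsically defined — it is $\{\xi : h^0(\xi) > 0\}$ — hence is preserved by every automorphism, holomorphic or anti-holomorphic, in particular by the real structure; therefore $\mathcal{O}(\Theta)$ acquires a canonical real structure $\sigma_P$. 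Pulling back, the Poincaré bundle $\mathcal{L}$ over $X \times \mathrm{Pic}^{g-2}(X)$ inherits $\sigma_P$ with $\sigma_P \circ \sigma_P = \mathrm{Id}$, because there is no sign ambiguity: the real structure on $\mathcal{O}(\Theta)$ coming from an honest invariant divisor is the one for which the canonical section is real, and its square is the identity. This is the step I expect to require the most care — one must make sure that in identifying $(\sigma \times \widetilde{\sigma})^* \overline{\mathcal{L}}$ with $\mathcal{L}$ via the defining section of $\Theta$ one does not pick up a factor of $-1$, i.e. one must pin down that the natural isomorphism squares to $+\mathrm{Id}$ rather than $-\mathrm{Id}$.

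Finally I would record that the outcome is the asserted natural real universal line bundle over $X \times \mathrm{Pic}^{g-2}(X)$: it is universal by the degree/slice computation above, and real by the invariance of the theta divisor. I would remark that the specific value $d = g-2$ is forced by the requirement that $\Theta$ be a divisor whose slices have the correct degree; for general $d$ one no longer has such a canonical divisorial construction, which is exactly why Theorem~\ref{thm1} must distinguish cases by the parity of $\chi(L) = d - g + 1$ — and indeed $d = g-2$ gives $\chi(L) = -1$, which is odd, consistent with the existence statement. The main obstacle, as noted, is the careful bookkeeping of the real structure on $\mathcal{O}(\Theta)$ under the anti-holomorphic involution, ensuring the involution condition $\sigma_P \circ \sigma_P = \mathrm{Id}_{\mathcal{L}}$ holds on the nose.
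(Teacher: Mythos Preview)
Your approach is essentially the paper's: pull back the canonical theta divisor from $\mathrm{Pic}^{g-1}(X)$ along $(x,L)\mapsto L(x)$, twist by $K_X$, and observe that the construction is intrinsic and hence compatible with $\sigma\times\widetilde\sigma$. Two small points: the restriction of $\phi^*\mathcal{O}(\Theta)$ to $X\times\{L\}$ has degree $g$, not $g-1$ (indeed $\deg(K_X\otimes L^{-1})=(2g-2)-(g-2)=g$, so your stated class is right but your stated degree is off by one); and rather than building a bundle that restricts to $L^{-1}$ and then ``replacing $L$ by $L^{-1}$'', it is cleaner to take $\phi^*\mathcal{O}(-\Theta)\otimes p_X^*K_X$ directly, which restricts to $L$ --- this is exactly what the paper does. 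The one substantive difference is in justifying the restriction class: you argue via Riemann--Roch and Serre duality (for generic $L$, the locus $\{x:h^0(L(x))>0\}$ is the unique effective divisor in $|K_X\otimes L^{-1}|$, then extend by continuity), whereas the paper identifies $\mathcal{O}(-\Theta)$ as a determinant-of-cohomology line bundle and computes the restriction via the Deligne pairing formalism, citing Beilinson--Manin. Your argument is more elementary but needs the continuity/specialization step made explicit for non-generic $L$; the paper's works uniformly but imports more machinery.
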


\begin{proof}
On $\text{Pic}^{g-1}(X)$, there is a canonical theta hypersurface
$\Theta$ that parametrizes all line bundles
$\zeta\,\longrightarrow\, X$
of degree $g-1$ such that $H^0(X,\, \zeta)\,\not=\, 0$. If
$$
{\mathcal L}_{g-1}\, \longrightarrow\,
X\times \text{Pic}^{g-1}(X)
$$ is a universal line bundle, then
the line bundle ${\mathcal O}_{\text{Pic}^{g-1}(X)}(-\Theta)$ is
the determinant line bundle
$$
\det {\mathcal L}_{g-1}\, :=\,
(\det p_{2*}{\mathcal L}_{g-1})\otimes
(\det R^1 p_{2*}{\mathcal L}_{g-1})^*\,\longrightarrow\,
\text{Pic}^{g-1}(X)\, ,
$$
where $p_2$ is the projection of $X\times \text{Pic}^{g-1}(X)$ to
$\text{Pic}^{g-1}(X)$. Since $\chi(L)\,=\,0$ for any
$L\,\in\, \text{Pic}^{g-1}(X)$, the determinant line bundle
$\det {\mathcal L}_{g-1}$ is independent of the choice
of ${\mathcal L}_{g-1}$.

Let
\begin{equation}\label{phi2}
\phi\, :\, X\times {\rm Pic}^{g-2}(X)\, \longrightarrow\,
{\rm Pic}^{g-1}(X)
\end{equation}
be the map defined by $(x\, ,L)\, \longmapsto\, L
\bigotimes{\mathcal O}_X(x)$. Define a line bundle
\begin{equation}\label{b1}
{\mathcal L}\, :=\,
\phi^*{\mathcal O}_{{\rm Pic}^{g-1}(X)}(-\Theta)\otimes p^*_X K_X
\,\longrightarrow\, X\times {\rm Pic}^{g-2}(X),
\end{equation}
where $K_X$ is the holomorphic cotangent bundle of $X$,
and $p_X$ is the projection of $X\times {\rm Pic}^{g-2}(X)$
to $X$. We will show that $\mathcal L$ is a universal line bundle.

For that, take any holomorphic line bundle
$$
\zeta\,\longrightarrow\, X
$$
of degree $g-2$. Consider the line bundle
$$
\widetilde{\zeta}\,:=\, 
(q^*_1 \zeta)\otimes {\mathcal O}_{X\times X}(\Delta)
\,\longrightarrow\, X\times X\, ,
$$
where $q_i$, $i\,=\, 1\, ,2$, is the projection of
$X\times X$ to the $i$--th factor, and $\Delta\,\subset\,
X\times X$ is the diagonal divisor. From the above
mentioned identification of ${\mathcal
O}_{\text{Pic}^{g-1}(X)}(-\Theta)$ as a determinant line bundle
it follows that
\begin{equation}\label{b0}
\phi^*{\mathcal O}_{{\rm 
Pic}^{g-1}(X)}(-\Theta)\vert_{X\times \{\zeta\}}
\,=\,
\det \widetilde{\zeta}\,:=\, (\det R^0q_{2*}\widetilde{\zeta})
\otimes (\det R^1q_{2*}\widetilde{\zeta})^*\, ,
\end{equation}
where $\phi$ is the map in \eqref{phi2}.

In view of \eqref{b0}, to prove that
$\mathcal L$ is a universal line bundle it suffices
to show that
\begin{equation}\label{z1}
\zeta\,=\, (\det \widetilde{\zeta})\otimes K_X\, .
\end{equation}
To prove \eqref{z1}, we use \cite[p. 368, Lemma 6]{BM}
for the family of line bundles $\widetilde{\zeta}$
over $X$ parametrized by the second factor of $X\times X$.
Given a pair of algebraic line bundles $\eta_1$ and $\eta_2$ on an
algebraic family of curves, the Deligne pairing $\langle \eta_1\, ,
\eta_2\rangle$ is a line bundle on the parameter space
(see \cite{De}, \cite{BM} for its construction). The line bundle
$\langle q^*_1 \zeta\, ,{\mathcal O}_{X\times X}(\Delta) \rangle$
is isomorphic to $\zeta$ (see
\cite[p. 367, Proposition 5(c)]{BM}). Now \cite[p. 368, Lemma 6]{BM}
says that
\begin{equation}\label{b2}
\zeta\,=\, (\det \widetilde{\zeta})\otimes
(\det {\mathcal O}_{X\times X}(\Delta))^*
\end{equation}
(all other line bundles in \cite[p. 368, Lemma 6]{BM} are trivial
in our case).

Consider the short exact sequence of sheaves on $X\times X$
\begin{equation}\label{b3}
0\,\longrightarrow\, {\mathcal O}_{X\times X}
\,\longrightarrow\, {\mathcal O}_{X\times X}(\Delta)
\,\longrightarrow\, {\mathcal O}_{X\times X}(\Delta)\vert_\Delta
\,\longrightarrow\, 0\, .
\end{equation}
The Poincar\'e adjunction formula says that the restriction of
the line bundle ${\mathcal O}_{X\times X}(\Delta)$ to $\Delta$
is the tangent bundle $TX$. Hence from \eqref{b3},
$$
\det {\mathcal O}_{X\times X}(\Delta)\,=\, TX\, ,
$$
where $TX$ is the holomorphic tangent bundle. Now \eqref{b2}
implies \eqref{z1}. Therefore, $\mathcal L$
is a universal line bundle over $X\times{\rm Pic}^{g-2}(X)$.

{}From the construction of $\mathcal L$
it follows immediately that there is a natural involution
of $\mathcal L$ lifting the anti--holomorphic involution
$\sigma\times\widetilde{\sigma}$ of $X\times
{\rm Pic}^{g-2}(X)$. Therefore, $\mathcal L$
is a real universal line bundle. This completes the
proof of the lemma.
\end{proof}

We recall that a real (respectively, quaternionic) algebraic line bundle 
on $X_{\mathbb R}$
is a holomorphic line bundle on $X$ equipped with an involution
lifting $\sigma$, whose square is the identity (respectively,
minus the identity). 

\begin{lemma}\label{lem2}
Let $d$ be an integer such that there
is a real universal line bundle on $X\times {\rm Pic}^{d}(X)$.
Let $L$ be a real (respectively, quaternionic) line bundle on $X$
of degree $d_1$. Then there is a real (respectively, quaternionic)
universal line bundle on $X\times {\rm Pic}^{d+d_1}(X)$. 

Similarly, let $d$ be an integer such that there
is a quaternionic universal line bundle on $X\times {\rm Pic}^{d}(X)$.
Let $L$ be a real (respectively, quaternionic) line bundle on $X$
of degree $d_1$. Then there is a quaternionic (respectively, real)
universal line bundle on $X\times {\rm Pic}^{d+d_1}(X)$. 
\end{lemma}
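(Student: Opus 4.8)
The idea is to produce the desired universal line bundle on $X\times {\rm Pic}^{d+d_1}(X)$ by tensoring the given universal line bundle on $X\times {\rm Pic}^d(X)$ with a suitable pullback of $L$, after transporting everything along the translation isomorphism ${\rm Pic}^d(X)\xrightarrow{\ \sim\ }{\rm Pic}^{d+d_1}(X)$ given by $M\longmapsto M\otimes L$. First I would fix notation: let ${\mathcal L}_d\longrightarrow X\times {\rm Pic}^d(X)$ be the given universal line bundle (real or quaternionic as appropriate), let $\tau\,:\,{\rm Pic}^d(X)\longrightarrow {\rm Pic}^{d+d_1}(X)$ be the translation $M\mapsto M\otimes L$, and let $p_X,p_P$ be the two projections from $X\times {\rm Pic}^d(X)$. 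Since $L$ is in particular a holomorphic line bundle on $X$, the bundle
\[
{\mathcal L}_{d+d_1}\ :=\ \big((\mathrm{Id}_X\times\tau)^{-1}\big)^{*}\big({\mathcal L}_d\otimes p_X^{*}L\big)\ \longrightarrow\ X\times {\rm Pic}^{d+d_1}(X)
\]
has, by construction, restriction to $X\times\{M\otimes L\}$ isomorphic to ${\mathcal L}_d|_{X\times\{M\}}\otimes L$, which lies in the class $M\otimes L$; hence ${\mathcal L}_{d+d_1}$ is a universal line bundle on $X\times {\rm Pic}^{d+d_1}(X)$.

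The remaining point is to equip ${\mathcal L}_{d+d_1}$ with an involution lifting $\sigma\times\widetilde\sigma$ and to compute the sign of its square. Here I would use two facts. First, the involution $\widetilde\sigma$ on ${\rm Pic}^{d+d_1}(X)$ is compatible with $\tau$ in the sense that $\widetilde\sigma\circ\tau = \tau\circ\widetilde\sigma$ precisely because $\sigma^{*}\overline{M\otimes L}=\sigma^{*}\overline M\otimes\sigma^{*}\overline L$; so $\mathrm{Id}_X\times\tau$ intertwines $\sigma\times\widetilde\sigma$ on the two sides, and the pullback of a lift along this equivariant isomorphism is again a lift. Second, $L$ being a real (respectively quaternionic) line bundle means it carries an isomorphism $\eta_L\,:\,L\to\sigma^{*}\overline L$ with $\sigma^{*}(\overline{\eta_L})\circ\eta_L=\mathrm{Id}_L$ (respectively $=-\mathrm{Id}_L$). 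Tensoring the given lift $\sigma_P$ of ${\mathcal L}_d$ with $p_X^{*}\eta_L$ gives a lift of $\sigma\times\widetilde\sigma$ on ${\mathcal L}_d\otimes p_X^{*}L$; transporting it via $\mathrm{Id}_X\times\tau$ gives the sought lift of ${\mathcal L}_{d+d_1}$.

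The sign bookkeeping is then the heart of the matter, and it is short: the square of a tensor product of two lifts is the product of the squares, so if ${\mathcal L}_d$ is real ($\sigma_P\circ\sigma_P=\mathrm{Id}$) the new lift squares to $(+1)\cdot(\pm 1)=\pm 1$ according as $L$ is real or quaternionic, and if ${\mathcal L}_d$ is quaternionic ($\sigma_P\circ\sigma_P=-\mathrm{Id}$) the new lift squares to $(-1)\cdot(\pm 1)=\mp 1$. This is exactly the four-way case distinction in the statement: real $+$ real $\to$ real, real $+$ quaternionic $\to$ quaternionic, quaternionic $+$ real $\to$ quaternionic, quaternionic $+$ quaternionic $\to$ real. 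I expect the main (minor) obstacle to be verifying carefully that taking the square of the lift commutes with tensor product and with pullback along the equivariant isomorphism $\mathrm{Id}_X\times\tau$ — i.e., that the identifications $(\sigma\times\widetilde\sigma)^{*}\overline{(\sigma\times\widetilde\sigma)^{*}\overline{(-)}}=(-)$ used implicitly on both factors are the canonical ones and agree with the one used in the definition of a lift; once this is checked the sign computation is immediate and the proof is complete.
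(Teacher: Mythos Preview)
Your proposal is correct and follows essentially the same approach as the paper: the paper pulls back the given universal bundle along the translation map $\phi:{\rm Pic}^{d+d_1}(X)\to{\rm Pic}^d(X)$, $L'\mapsto L'\otimes L^*$ (the inverse of your $\tau$), and tensors with $p_X^*L$, leaving the sign verification implicit. Your write-up is in fact more explicit than the paper's about why $\tau$ intertwines the involutions and about the $\pm1$ bookkeeping for the four cases.
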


\begin{proof}
Let ${\mathcal L}\, \longrightarrow\, X\times {\rm Pic}^{d}(X)$
be a real universal line bundle. Let
$$
\phi\, :\, {\rm Pic}^{d+d_1}(X)\, \longrightarrow\,
{\rm Pic}^{d}(X)
$$
be the morphism defined by $L'\, \longmapsto\, L'\bigotimes L^*$.
Then $(\text{Id}_X\times\phi)^*{\mathcal L}\bigotimes p^*_X L$,
where $p_X$ is the projection of $X\times {\rm Pic}^{d+d_1}(X)$ to $X$, 
is naturally a real (respectively, quaternionic) universal line
bundle if $L$ is real (respectively, quaternionic).
The same procedure works for the second half of the proposition.
\end{proof}

For any point $x_0\, \in\, X$, the line bundle
${\mathcal O}_X(x_0+\sigma(x_0))$ is real; taking duals, varying $x_0$ 
and tensoring, there are then real line bundles on $X$ in any even
degree. There are none in odd degree. In the same vein, one can show that 
quaternionic line bundles exist in degree $d$ if and only if $d-g+1 
\equiv 0$ mod $2$ [BHH], \cite[p. 55, Theorem 2.6]{AB}. For example, there 
is a quaternionic theta characteristic on $X$ 
if $g$ is even (see \cite[pp. 61--62]{At2}; recalling that $X_{\mathbb 
R}$ does not have any real point, there is no real theta 
characteristic on $X$ because there is no real line bundle on $X$ of odd 
degree).

\begin{proposition}\label{propexistence}
If the integer $d-g$ is even, then there
is a real universal line bundle over $X\times {\rm Pic}^d(X)$.

If the degree $d$ is odd, then there
is a quaternionic universal line bundle over $X\times {\rm Pic}^d(X)$.
\end{proposition}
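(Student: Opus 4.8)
The plan is to bootstrap everything from the single explicit example furnished by Lemma~\ref{lem1} — the natural real universal line bundle over $X\times{\rm Pic}^{g-2}(X)$ — by applying the twisting construction of Lemma~\ref{lem2}. The only extra inputs are the two facts recorded in the paragraph immediately preceding the statement: real algebraic line bundles on $X$ exist precisely in the even degrees (since $X_{\mathbb R}$ has no real point), while quaternionic line bundles on $X$ exist in degree $d_1$ precisely when $d_1-g+1\equiv 0$ mod $2$.

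For the first assertion, assume $d-g$ is even. Then $d_1:=d-(g-2)=d-g+2$ is even, so there is a real algebraic line bundle $L$ on $X$ of degree $d_1$. Feeding the real universal line bundle of Lemma~\ref{lem1} and this $L$ into Lemma~\ref{lem2} produces a real universal line bundle over $X\times{\rm Pic}^{(g-2)+d_1}(X)=X\times{\rm Pic}^d(X)$, which is what we want. (Consistently with Theorem~\ref{thm1}, the Euler characteristic $\chi(L)=d-g+1$ is then odd.)

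For the second assertion, assume $d$ is odd, and again set $d_1:=d-g+2$. Since $d$ is odd, $d_1-g+1=d-2g+3\equiv d+1\equiv 0$ mod $2$, so there is a quaternionic line bundle $L$ on $X$ of degree $d_1$. Applying Lemma~\ref{lem2} (a real universal line bundle twisted by a quaternionic line bundle is a quaternionic universal line bundle) to the real universal line bundle of Lemma~\ref{lem1} and this $L$ yields a quaternionic universal line bundle over $X\times{\rm Pic}^{(g-2)+d_1}(X)=X\times{\rm Pic}^d(X)$.

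There is no genuine obstacle here beyond the parity bookkeeping needed to line up degrees so that Lemmas~\ref{lem1} and~\ref{lem2} can be chained. The one point that truly cannot be derived within this section is the existence of quaternionic line bundles on $X$ in all degrees $\equiv g-1$ mod $2$ — for instance a quaternionic theta characteristic when $g$ is even — which uses that $X_{\mathbb R}$ has no real point and is quoted from \cite{BHH} and \cite[p.~55, Theorem~2.6]{AB}; granting this, together with the elementary construction of real line bundles in even degrees from $\mathcal{O}_X(x_0+\sigma(x_0))$, both cases are immediate.
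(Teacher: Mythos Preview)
Your proof is correct and follows essentially the same approach as the paper: both start from the real universal line bundle in degree $g-2$ furnished by Lemma~\ref{lem1} and reach the desired degrees by twisting via Lemma~\ref{lem2} with real or quaternionic line bundles on $X$ of the appropriate degree, the existence of the latter being quoted from \cite{BHH} and \cite{AB}. The only cosmetic difference is that you carry out the parity bookkeeping explicitly with a single shift $d_1=d-g+2$, whereas the paper simply notes the existence of a real line bundle of degree two and a quaternionic line bundle of degree one or zero (according as $g$ is even or odd) and leaves the combination to the reader.
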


\begin{proof}
Indeed, Lemma \ref{lem1} and Lemma \ref{lem2} allow us to produce all the 
universal bundles in the theorem from the real one in degree $g-2$, by 
twisting by an appropriate real or quaternionic line bundle. We note
that there is a real line bundle on $X$ of degree two. Also,
there is a quaternionic line bundle on $X$ of degree one (respectively,
zero) if $g$ is even (respectively, odd)
\cite[Theorem 6.6]{BHH} (see also \cite[p. 55, Theorem 2.6]{AB}).
\end{proof}

\subsection{Non-existence: the case of even genus}

First assume that the genus $g$ is an even integer.

\begin{proposition}\label{prop1}
There is no real universal line bundle on $X\times {\rm Pic}^{
2d+1}(X)$.

There is no quaternionic universal line bundle on $X\times {\rm Pic}^{
2d}(X)$.
\end{proposition}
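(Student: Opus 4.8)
The plan is to argue by contradiction, exploiting the existence statement in Proposition~\ref{propexistence} together with Lemma~\ref{lem2} to reduce both non-existence claims to a single parity obstruction, and then to locate that obstruction in a topological invariant of the Picard variety that detects whether a line bundle on $X\times {\rm Pic}^d(X)$ carrying a real or quaternionic structure can restrict to a universal family. Concretely, suppose first there were a real universal line bundle on $X\times {\rm Pic}^{2d+1}(X)$. By Proposition~\ref{propexistence}, since $g$ is even, there is a quaternionic universal line bundle on $X\times {\rm Pic}^{2d'}(X)$ for a suitable choice, and there is a real line bundle on $X$ of degree two as well as (since $g$ is even) a quaternionic line bundle on $X$ of degree one. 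Twisting via Lemma~\ref{lem2} lets one pass freely between the two asserted statements: a real universal line bundle in odd degree would, after tensoring by a quaternionic line bundle of degree one, produce a quaternionic universal line bundle in even degree, so the two non-existence assertions are equivalent, and it suffices to rule out one of them.

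Next I would extract the actual obstruction. Since a universal line bundle on $Y\times {\rm Pic}^d(Y)$ exists for every $d$ over $\mathbb C$, the question is purely about whether the $\mathbb C$-universal line bundle ${\mathcal L}$ can be chosen compatibly with $\sigma\times\widetilde\sigma$ with square $+\mathrm{Id}$ (real) versus $-\mathrm{Id}$ (quaternionic). Any two universal line bundles on $X\times {\rm Pic}^d(X)$ differ by $p_2^*N$ for some $N\in {\rm Pic}({\rm Pic}^d(X))$, and tensoring the would-be involution by a lift of $\widetilde\sigma$ to $N$ changes its square by the square of that lift. So the obstruction lives in the group classifying line bundles on ${\rm Pic}^d(X)$ with an anti-holomorphic involution covering $\widetilde\sigma$, modulo those whose square-of-square is trivial — essentially an element of an $H^1$ or $H^2$ group of the quotient orbifold ${\rm Pic}^d(X)/\widetilde\sigma$ with $\mathbb Z/2$ coefficients. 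The key computation is to identify this obstruction class with the parity of $d$ (equivalently of $\chi$, since $g$ is even so $\chi = d-g+1 \equiv d+1$). This is where the even-genus hypothesis enters: the fixed-point structure of $\widetilde\sigma$ on ${\rm Pic}^d(X)$, hence the relevant equivariant cohomology, depends on $g \bmod 2$ (this is exactly the dichotomy between Proposition~\ref{prop1} and the odd-genus case treated separately), and one uses that $X_{\mathbb R}$ has no real points together with $g$ even to show $\widetilde\sigma$ acts on ${\rm Pic}^d(X)$ in a way whose relevant torsor class is nontrivial precisely in the stated degrees.

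Concretely, I expect the cleanest route is the determinant-line-bundle trick already used in Lemma~\ref{lem1}: if ${\mathcal L}$ is a real universal line bundle on $X\times {\rm Pic}^{d}(X)$, then the composition $\phi$ (adding $\mathcal O_X(x_0+\sigma(x_0))$-type twists, which are real) relates ${\mathcal L}$ to a real universal line bundle in degree $g-1$, whose determinant line bundle is forced to be $\mathcal O(-\Theta)$ with its canonical real structure; comparing the induced real structure on $\mathcal O(-\Theta)$ coming from ${\mathcal L}$ with the canonical one yields a sign, and chasing that sign through the $\chi=0$ normalization in degree $g-1$ gives the parity constraint. The arithmetic of how the sign accumulates — how many factors of a $(-1)$ appear when one moves $d$ by steps of $2$ from the starting degree to $g-1$, and how the absence of real points forces the sign in degree $g-1$ to be the "wrong" one for the forbidden parities — is the main obstacle, and it is precisely where one must invoke that $g$ is even (the odd-genus case in the next proposition presumably reverses exactly this sign). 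Once the sign bookkeeping is pinned down, the contradiction is immediate: a real universal line bundle in degree $2d+1$ would force $\mathcal O(-\Theta)$ to carry a real structure it does not admit, and symmetrically for the quaternionic statement in degree $2d$.
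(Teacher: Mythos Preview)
Your reduction is sound: the two statements are indeed equivalent via Lemma~\ref{lem2} and a quaternionic line bundle of degree one on $X$ (which exists for $g$ even), and you correctly observe that any two universal line bundles differ by $p_2^*N$ for some $N$ on ${\rm Pic}^d(X)$, so the problem becomes whether $N$ can be chosen quaternionic. But you then leave the actual obstruction unresolved: the ``sign bookkeeping'' via $\Theta$ that you propose is never carried out, and you yourself flag it as ``the main obstacle.'' As written, the argument stops exactly where the content is.

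The paper's proof is shorter and bypasses the $\Theta$-divisor route entirely. It fixes a quaternionic line bundle $L_0$ of degree one on $X$, so that tensoring the known real universal bundle on $X\times{\rm Pic}^0(X)$ by $p_X^*L_0$ produces an explicit quaternionic universal bundle $\mathcal L_1$ on $X\times{\rm Pic}^1(X)$. If a real universal bundle existed it would be $\mathcal L_1\otimes p_2^*\xi$ with $\xi$ quaternionic on ${\rm Pic}^1(X)$. The punchline is then a one-line citation: ${\rm Pic}^1(X_{\mathbb R})$ has a real point (namely the class of $L_0$ itself), and a real variety with a real point admits no quaternionic line bundle \cite[Proposition~2.7.4]{CP}. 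That is the missing idea in your sketch --- the obstruction is not a delicate sign accumulation along $\Theta$, but the blunt fact that quaternionic line bundles cannot live over a base with a rational point. Your determinant/$\Theta$ approach is closer in spirit to the odd-genus argument in Proposition~\ref{prop2}, where the paper does pull back $\Theta$ and compute a degree; but even there the contradiction is a parity of a degree on $X$, not an equivariant-cohomology class on the Picard variety.
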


\begin{proof}
For any point $x\,\in\, X$, the line bundle
${\mathcal O}_X(x+\sigma(x))$ is real. Therefore,
in view of Lemma \ref{lem2}, it suffices to prove
the first part of the proposition for $d\,=\,1$.
 
Fix a quaternionic line bundle
\begin{equation}\label{f1}
L_0\, \longrightarrow\, X
\end{equation}
such that $\text{degree}(L_0)\, =\,1$.
Let
$$
{\mathcal L}\, \longrightarrow\, X\times {\rm Pic}^{0}(X)
$$
be a real universal line bundle; from Proposition \ref{propexistence}
we know that such a universal line bundle exists. Let
$$
p_X\, :\, X\times {\rm Pic}^{1}(X)\, \longrightarrow\, X
$$
be the natural projection. We note that
\begin{equation}\label{f3}
{\mathcal L}_1\, :=\,
p^*_X L_0\otimes (\text{Id}_X\times \varphi)^*{\mathcal L}
\longrightarrow\,X\times {\rm Pic}^1(X)
\end{equation}
is a quaternionic universal line bundle.

To see that there is no real universal line bundle
on $X\times {\rm Pic}^1(X)$, let
$$
p_2\, :\, X\times {\rm Pic}^{1}(X)\, \longrightarrow\,
{\rm Pic}^{1}(X)
$$
be the natural projection.
Any universal line bundle on $X\times {\rm Pic}^1(X)$ is of the
form
\begin{equation}\label{f4}
{\mathcal L}_1\otimes p^*_2 \xi\, ,
\end{equation}
where ${\mathcal L}_1$ is the line bundle in \eqref{f3}.
If the line bundle ${\mathcal L}_1\bigotimes p^*_2 \xi$ in \eqref{f4}
is real, then $\xi$ must be a quaternionic line bundle, because
${\mathcal L}_1$ is quaternionic. On the
other hand, it can be shown that there are no quaternionic line
bundles on ${\rm Pic}^1(X)$. Indeed, the variety ${\rm Pic}^1
(X_{\mathbb R})$ defined over $\mathbb R$
has a real point because the line bundle $L_0$
in \eqref{f1} is represented by a real point of ${\rm Pic}^1
(X_{\mathbb R})$; hence there is no quaternionic line bundle
on ${\rm Pic}^1(X)$ (see \cite[p. 201, Proposition 2.7.4]{CP}).

For the second part, if one had a universal quaternionic bundle in even 
degree, one could use Lemma \ref{lem2} to produce a universal real bundle 
in odd degree because there is a quaternionic line bundle on $X$ of
degree one \cite[Theorem 6.6]{BHH}; we
have just proven that these don't exist. This completes 
the proof of the proposition.
\end{proof}

\subsection{Non-existence; the case of odd genus}

Now assume that $g$ is odd.

\begin{proposition}\label{prop2}
There is no real or quaternionic universal line bundle on
$X\times {\rm Pic}^{2d}(X)$.
\end{proposition}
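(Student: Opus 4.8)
The plan is to reduce to a single even degree and then to exhibit a $\mathbb Z/2$--obstruction which is invisible over $\mathbb C$ but nonzero over $\mathbb R$ exactly when $g$ is odd.

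\emph{Reduction.} Since ${\mathcal O}_X(x+\sigma(x))$ is a real line bundle on $X$ of degree $2$, Lemma \ref{lem2} shows that the existence of a real (respectively, quaternionic) universal line bundle on $X\times {\rm Pic}^{2d}(X)$ does not depend on $d$. Moreover, because $g$ is odd there is a quaternionic line bundle $L_0$ on $X$ of degree $0$ (Proposition \ref{propexistence}, \cite[p. 55, Theorem 2.6]{AB}, \cite{BHH}), and twisting by $L_0$ interchanges real and quaternionic universal line bundles in even degree, again by Lemma \ref{lem2}. Hence it suffices to prove that there is no real universal line bundle on $X\times {\rm Pic}^{g-1}(X)$. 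I single out the degree $g-1$ (which is even, as $g$ is odd) because there $\chi=0$, so that the determinant of cohomology $\det R(p_2)_*{\mathcal L}={\mathcal O}_{{\rm Pic}^{g-1}(X)}(-\Theta)$ is canonical, as in the proof of Lemma \ref{lem1}.

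\emph{The obstruction.} Fix a universal line bundle ${\mathcal L}$ on $X\times {\rm Pic}^{g-1}(X)$. Then $(\sigma\times\widetilde{\sigma})^*\overline{\mathcal L}$ is again universal, so it equals ${\mathcal L}\otimes p_2^*\nu$ for a well-defined $\nu\in {\rm Pic}({\rm Pic}^{g-1}(X))$, and applying $(\sigma\times\widetilde{\sigma})^*\overline{(\,\cdot\,)}$ once more gives $\nu\otimes\widetilde{\sigma}^*\overline{\nu}\cong {\mathcal O}$. A real or quaternionic universal line bundle exists if and only if $\nu$ has the form $\xi\otimes(\widetilde{\sigma}^*\overline{\xi})^{-1}$; that is, the obstruction is the class of $\nu$ in the Tate cohomology group $\widehat{H}^{1}(\mathrm{Gal}(\mathbb C/\mathbb R),\,{\rm Pic}({\rm Pic}^{g-1}(X)))$, a finite $2$--group — the usual Brauer--type obstruction to the existence of a Poincar\'e bundle over a curve not defined over an algebraically closed field. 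Using $\det R(p_2)_*{\mathcal L}={\mathcal O}(-\Theta)$ together with the fact that $\Theta$ is defined over $\mathbb R$, one checks that this class actually lives in ${\rm Pic}^0({\rm Pic}^{g-1}(X))=\widehat{J}$, where $J={\rm Pic}^0(X)$; the principal polarization $J\cong\widehat{J}$ and the Abel--Jacobi map then make it explicit, and the real universal line bundle of Lemma \ref{lem1} on $X\times {\rm Pic}^{g-2}(X)$, pulled back along the $\widetilde{\sigma}$--equivariant map $\phi(x,L)=L\otimes{\mathcal O}_X(x)$, furnishes the needed comparison.

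\emph{Nonvanishing.} It remains to see that the identified class is nonzero, and this is where the parity of $g$ enters: it is nonzero precisely because $X$ carries no divisor of odd degree defined over $\mathbb R$. Equivalently, there is no real line bundle on $X$ of odd degree (every closed point of $X_{\mathbb R}$ has degree $2$, since $X_{\mathbb R}$ has no real point) and no quaternionic line bundle on $X$ of odd degree (quaternionic line bundles on $X$ occur only in degrees $\equiv g-1\equiv 0\pmod 2$; see \cite{BHH}, \cite[p. 55, Theorem 2.6]{AB}). This last step is the main difficulty: a crude numerical argument will not detect the obstruction, because when $g$ is odd every $\mathrm{Gal}(\mathbb C/\mathbb R)$--stable curve in ${\rm Pic}^{g-1}(X)$ meets $\Theta$ in an even number of points, so the real--versus--quaternionic sign has to be tracked carefully through the comparison of ${\mathcal L}$ with the bundle of Lemma \ref{lem1} rather than read off from degrees. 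Granting that the class is nonzero, there is no real universal line bundle on $X\times {\rm Pic}^{g-1}(X)$, and by the reduction above the proposition follows.
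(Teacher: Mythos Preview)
Your reduction is correct, and the cohomological framework you set up --- comparing a universal $\mathcal L$ with $(\sigma\times\widetilde\sigma)^*\overline{\mathcal L}$ to produce a class $[\nu]\in\widehat H^{1}\bigl(\mathrm{Gal}(\mathbb C/\mathbb R),\,\mathrm{Pic}(\mathrm{Pic}^{g-1}(X))\bigr)$ whose vanishing is equivalent to the existence of a real \emph{or} quaternionic universal bundle --- is a legitimate way to organise the problem. But the proof has a genuine gap: the entire content of the proposition is the \emph{nonvanishing} of this class, and you do not establish it. You write ``Granting that the class is nonzero'' and then conclude; the preceding sentences (``it is nonzero precisely because $X$ carries no divisor of odd degree defined over $\mathbb R$'', ``the real--versus--quaternionic sign has to be tracked carefully through the comparison with Lemma~\ref{lem1}'') state what one hopes is true and why it is delicate, but do not carry out any computation. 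The suggested comparison via $\phi(x,L)=L\otimes\mathcal O_X(x)$ is also not usable as written: $\phi$ maps $X\times\mathrm{Pic}^{g-2}(X)$ to $\mathrm{Pic}^{g-1}(X)$, not to $X\times\mathrm{Pic}^{g-1}(X)$, so pulling back along it does not produce a universal bundle on the space where your $\nu$ lives, and you do not explain how the parity of $g$ would emerge from it. Likewise, the assertion that $\nu$ lies in $\mathrm{Pic}^0$ via the determinant of cohomology is not justified: since $\chi=0$, the determinant is insensitive to twisting by $p_2^*\nu$ and so carries no information about $\nu$.

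The paper's proof is, in effect, exactly the missing computation, done concretely rather than in Tate--cohomology language. Working on $X\times\mathrm{Pic}^0(X)$, it first twists a hypothetical real or quaternionic universal bundle by a real line bundle pulled back from $\mathrm{Pic}^0(X)$ so that the restrictions $\mathcal L\vert_{\{x\}\times\mathrm{Pic}^0(X)}$ become topologically trivial; these restrictions then define a real morphism $f\colon X\to\mathrm{Pic}^0(X)^\vee$. Pulling back (a real translate of) the theta divisor along $f$ yields a \emph{real} line bundle on $X$ of degree $g$, which is impossible for $g$ odd since $X_{\mathbb R}$ has no real points. That degree computation is precisely the pairing of your obstruction class with the principal polarisation, and it is what your argument needs but does not supply.
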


\begin{proof}
In view of Lemma \ref{lem2}, it suffices to prove
the proposition for $d\,=\, 0$.

Let
\begin{equation}\label{e0}
{\mathcal L}'\, \longrightarrow\, X\times {\rm Pic}^{0}(X)
\end{equation}
be a real or quaternionic universal line bundle. We will
show that ${\mathcal L}'$ can be modified to construct another
real or quaternionic universal line bundle which is topologically
trivial in the direction of ${\rm Pic}^{0}(X)$.

Let
$$
c\, \in\, H^2({\rm Pic}^{0}(X),\, {\mathbb Z})
$$
be the K\"unneth component of the Chern class $c_1({\mathcal L}')
\,\in\, H^2(X\times {\rm Pic}^{0}(X),\, {\mathbb Z})$
of the line bundle in \eqref{e0}. Since ${\mathcal L}'$ is real or
quaternionic, the anti--holomorphic involution of ${\rm Pic}^{0}(X)$
(see \eqref{p2}) preserves $c$. Hence there is a real line bundle
\begin{equation}\label{e1}
\xi\, \longrightarrow\, {\rm Pic}^{0}(X)
\end{equation}
such that $c_1(\xi)\, =\, c$ (see the paragraph
following Remark 3.3.2 in \cite[p. 215]{CP}); note that the
trivial line bundle ${\mathcal O}_X$ is fixed by the anti--holomorphic
involution of ${\rm Pic}^{0}(X)$, so the assumption in \cite[p. 214,
\S~3.3]{CP} is fulfilled.

Let $p_2\, :\, X\times {\rm Pic}^{0}(X)\, \longrightarrow\, {\rm 
Pic}^{0}(X)$ be the natural projection. Consider the line bundle
\begin{equation}\label{e2}
{\mathcal L}\, :=\, {\mathcal L}'\otimes p^*_2 \xi^*
\, \longrightarrow\, X\times {\rm Pic}^{0}(X)\, ,
\end{equation}
where ${\mathcal L}'$ and $\xi$ are constructed in \eqref{e0}
and \eqref{e1} respectively. It is clear that ${\mathcal L}$
is real or quaternionic, and also it is topologically
trivial in the direction of ${\rm Pic}^{0}(X)$.

Let
\begin{equation}\label{f5}
A\, :=\, {\rm Pic}^{0}(X)^\vee
\end{equation}
be the moduli space of topologically trivial line bundles on
${\rm Pic}^{0}(X)$. The anti--holomorphic involution
$\widetilde{\sigma}$ of ${\rm Pic}^{0}(X)$ produces an
anti--holomorphic involution of $A$ by sending any
$\eta$ to $\widetilde{\sigma}^*\overline{\eta}$;
this anti--holomorphic involution of $A$ will be denoted
by $\sigma'$.

We have the morphism
\begin{equation}\label{e3}
f\, :\, X\, \longrightarrow\, A
\end{equation}
that sends any $x\, \in\, X$ to the point representing the
restriction ${\mathcal L}\vert_{\{x\}\times {\rm Pic}^{0}(X)}$,
where $\mathcal L$ is constructed in \eqref{e2}. Note that
$f$ is defined over $\mathbb R$, meaning $f(\sigma(x))\,=
\, \sigma'(f(x))$ for all $x\,\in\, X$.

Fix a point $x_0\, \in\, X$. The divisor
$$
D\, =\, \frac{g-1}{2}(x_0+\sigma(x_0))
$$
is real, meaning it is
fixed by $\sigma$. We have an isomorphism
$$
\phi\, :\, {\rm Pic}^{0}(X)\, \longrightarrow\,
{\rm Pic}^{g-1}(X)
$$
defined by $L\, \longmapsto\, L\bigotimes {\mathcal O}_X(D)$.
This morphism intertwines the anti--holomorphic involutions of
${\rm Pic}^{0}(X)$ and ${\rm Pic}^{g-1}(X)$, so it is defined
over $\mathbb R$. Let
\begin{equation}\label{e4}
\Theta_0\,:=\, \phi^{-1}(\Theta) \, \subset\, {\rm Pic}^{0}(X)
\end{equation}
be the inverse image of the theta divisor on ${\rm 
Pic}^{g-1}(X)$ (see the proof of Lemma \ref{lem1} for
the definition of theta divisor).

For any $L\, \in\, {\rm Pic}^{0}(X)$, let
$$
\tau_L\, :\, {\rm Pic}^{0}(X)\, \longrightarrow\, {\rm Pic}^{0}(X)
$$
be the isomorphism defined by $L'\, \longmapsto\,L'\bigotimes L$.
The divisor $\Theta_0$ in \eqref{e4} produces an isomorphism
\begin{equation}\label{e5}
{\rm Pic}^{0}(X)\, \stackrel{\sim}{\longrightarrow}\, A
\end{equation}
(see \eqref{f5} for $A$) by sending any $L$ to the line bundle
${\mathcal O}_{{\rm Pic}^{0}(X)}(\tau^*_L\Theta_0-\Theta_0)$;
this map is an isomorphism because $\Theta_0$ gives
a principal polarization on $\text{Pic}^0(X)$. Let
$$
\Theta'\, \subset\, A
$$
be the image of $\Theta_0$ by the isomorphism in \eqref{e5}.

Note that the pull back $f^*{\mathcal O}_A(\Theta')\, \longrightarrow
\,X$ is a real line bundle, where $f$ is the morphism
in \eqref{e3}. We have
\begin{equation}\label{b4}
\text{degree}(f^*{\mathcal O}_A(\Theta'))\, =\,g
\end{equation}
(see \cite[p. 336]{GH}).
We note that \eqref{b4} also follows from
\eqref{b0} and \eqref{z1}, because they imply that
$\text{degree}(\phi^*{\mathcal O}_{{\rm 
Pic}^{g-1}(X)}(\Theta)\vert_{X\times \{\zeta\}})\,=\,
\text{degree}(K_X)- \text{degree} (\zeta)\,=\, g$.

On the other hand, since $g$ is odd, there is
no real line bundle on $X$ of degree $g$
(recall that $\sigma$ does not have any fixed point).
This completes the proof of the proposition.
\end{proof}

\section{Real universal bundles of higher rank}\label{sec3}

\subsection{Existence of universal bundles}
Henceforth, we will assume that
\begin{enumerate}
\item the rank $r$ is at least two, and

\item $d$ is coprime to $r$.
\end{enumerate}

Recall the construction of the universal bundle in Lemma
\ref{lem4}.

\begin{remark}\label{rem2}
{\rm The integer $a$ in \eqref{u1} can be taken to be even
if and only if the integer $\chi$ in \eqref{chi}
is odd. Indeed, if $\chi$ is even,
then $r$ is odd (recall that $d$ is coprime to r), and
hence from \eqref{u1} we know that $ar$ is odd. Therefore, $a$
is odd if $\chi$ is even. If $\chi$ is odd, then $\chi$ is coprime
to $2r$. Hence there are integers $a'$ and $b$ such that
$2ra' +\chi b\,=\, -1$. Hence $a$ can be taken to be even
if $\chi$ is odd.}
\end{remark}

\begin{proposition}\label{prop-existence-2}
If the integer $\chi$ in \eqref{chi} is odd, then there is a
real universal
vector bundle over $X\times {\mathcal M}_X(r,d)$. 

Thus there are real universal bundles when:
\begin{itemize}
\item{}$g$ even: the degree and the rank are of opposite parity.
\item{}$g$ odd: the degree is odd.
\end{itemize}

Given a real (respectively, quaternionic) line bundle on $X$ of degree 
$d_0$ then 
there is a universal real bundle over $X\times {\mathcal M}_X(r,d)$ if 
and only if there is a universal real (respectively, quaternionic) bundle 
over $X\times {\mathcal M}_X(r,d + rd_0)$. 

In particular, for $d$ odd, there is a universal quaternionic vector
bundle over $X\times {\mathcal M}_X(r,d)$.
\end{proposition}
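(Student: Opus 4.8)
\textbf{Plan for the proof of Proposition \ref{prop-existence-2}.}
The plan is to prove the three assertions in sequence, building on Lemma \ref{lem4} and Remark \ref{rem2}, and then on Lemma \ref{lem2}-type twisting arguments adapted to the higher rank moduli space.

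For the first assertion, suppose $\chi$ is odd. By Remark \ref{rem2}, the integer $a$ in \eqref{u1} may be chosen even. Then the second clause of Lemma \ref{lem4} directly gives a real universal vector bundle over $X\times {\mathcal M}_X(r,d)$. For the itemized consequences, I would simply unwind $\chi\,=\,d+r(1-g)$: when $g$ is even, $1-g$ is odd, so $\chi\equiv d+r \pmod 2$, which is odd precisely when $d$ and $r$ have opposite parity; when $g$ is odd, $1-g$ is even, so $\chi\equiv d\pmod 2$, odd precisely when $d$ is odd. (Recall $d$ is coprime to $r$, so not both can be even.)

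For the second assertion, the twisting argument: given a real (respectively, quaternionic) line bundle $L_0$ on $X$ of degree $d_0$, I would use the isomorphism
$$
{\mathcal M}_X(r,d)\,\stackrel{\sim}{\longrightarrow}\, {\mathcal M}_X(r,d+rd_0)\, ,\qquad E\,\longmapsto\, E\otimes L_0\, ,
$$
which intertwines $\widetilde\sigma$ with $\widetilde\sigma$ since $L_0$ (being real/quaternionic) is a fixed point of the involution on $\text{Pic}^{d_0}(X)$. If ${\mathcal F}\to X\times {\mathcal M}_X(r,d+rd_0)$ is a universal bundle, then pulling back along $\text{Id}_X\times(\otimes L_0)$ and tensoring by $p_X^*L_0^*$ (equivalently $p_X^*L_0$ going the other way) yields a universal bundle over $X\times{\mathcal M}_X(r,d)$; the real structure on ${\mathcal F}$ combines with the real (respectively, quaternionic) structure on $L_0^*$ to give a real (respectively, quaternionic) structure, because the sign multiplies: real$\,\times\,$real$\,=\,$real and real$\,\times\,$quaternionic$\,=\,$quaternionic. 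This is the "same procedure" as in Lemma \ref{lem2}, and the only point needing care is checking that tensoring a universal bundle by a pulled-back line bundle from $X$ preserves universality — which is immediate since it only changes each restriction ${\mathcal F}|_{X\times\{z\}}$ by a fixed line bundle, and this is compensated by the shift in the parametrizing moduli space.

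For the third assertion, take $d$ odd. By Proposition \ref{propexistence} (or the discussion preceding it), $X$ carries a quaternionic line bundle $L_0$ of degree $1$ when $g$ is even and of degree $0$ when $g$ is odd; in fact one can always find a quaternionic line bundle of some degree $d_0$ with $d-rd_0$ odd — I would pick $d_0$ so that $\chi(r,d-rd_0)=d-rd_0+r(1-g)$ is odd, i.e. arrange parity. More cleanly: by the first assertion there is a real universal bundle over $X\times{\mathcal M}_X(r,d')$ whenever $\chi(r,d')$ is odd; choosing $d'=d-rd_0$ with a quaternionic $L_0$ of degree $d_0$ so that $\chi(r,d')$ is odd (possible since quaternionic line bundles exist in a degree of each needed parity class by \cite[Theorem 6.6]{BHH} combined with shifting by ${\mathcal O}_X(x+\sigma(x))$ which is real of degree $2$), the second assertion converts this real universal bundle over $X\times{\mathcal M}_X(r,d')$ into a quaternionic one over $X\times{\mathcal M}_X(r,d'+rd_0)={\mathcal M}_X(r,d)$. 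The main obstacle I anticipate is bookkeeping the parity constraints so that such a $d_0$ actually exists simultaneously with the existence of a quaternionic line bundle of that degree; this reduces to the two facts that quaternionic line bundles exist exactly in degrees $\equiv g-1\pmod 2$ and that $d$ is coprime to $r$, and a short case check on the parity of $g$ and $r$ closes it.
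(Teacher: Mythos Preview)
Your proposal is correct and follows essentially the same route as the paper: the first assertion via Remark \ref{rem2} and Lemma \ref{lem4}, the second via the Lemma \ref{lem2}-style tensoring argument, and the third by twisting with a quaternionic line bundle of degree $0$ (when $g$ is odd) or degree $1$ (when $g$ is even) to reduce to the real case already handled. The paper carries out the final parity check explicitly rather than leaving it as ``a short case check,'' but the content is identical.
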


\begin{proof} In view of Remark \ref{rem2}, the first part is a
consequence
of Lemmas \ref{lem4}. The second part follows as in Lemma \ref{lem2}. For 
the 
third, let us consider first the case when the genus is odd. There is a 
quaternionic line bundle of degree zero on $X$. The second part of the 
proposition then tells us that there is universal quaternionic bundle if 
and
only if there is a real one in a given rank and degree; thus the 
quaternionic bundle 
exists for odd degree. For even genus, one has a quaternionic line 
bundle of degree one. One then has a universal quaternionic bundle over 
$X\times {\mathcal M}_X(r,d )$ if one has a universal real bundle over
$X\times {\mathcal M}_X(r,d-r )$. This happens when $r,d-r $ are of 
opposite parity, that is, when $d$ is odd.
\end{proof}

\subsection{Non existence; the real case of even degree} Assume that
the degree $d$ is even.

\begin{proposition}\label{prop3}
There is a real universal vector bundle over
$X\times {\mathcal M}_X(r,d)$ if and only if $\chi = d-r(g-1)$ is odd, 
i.e., if and only if $r$ is odd and $g$ is even.
\end{proposition}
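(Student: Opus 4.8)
The plan is to combine the results already established for the line-bundle case (Proposition \ref{prop2}) with a reduction-to-rank-one argument, together with the existence statement from Proposition \ref{prop-existence-2}. The ``if'' direction is already done: if $\chi$ is odd we have a real universal vector bundle by Proposition \ref{prop-existence-2}, and since $d$ is even, $\chi = d-r(g-1)$ is odd precisely when $r(g-1)$ is odd, i.e. when $r$ is odd and $g$ is even. So the whole content is the ``only if'' direction: assuming $d$ is even and a real universal bundle ${\mathcal E}\,\longrightarrow\, X\times {\mathcal M}_X(r,d)$ exists, I want to derive that $\chi$ is odd (equivalently, that we are \emph{not} in the case $r$ even, or $g$ odd).

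First I would reduce to a convenient degree using the twisting trick of Lemma \ref{lem2}/Proposition \ref{prop-existence-2}: since $d$ is even and ${\mathcal O}_X(x+\sigma(x))$ is a real line bundle of degree $2$, tensoring by $p_X^*{\mathcal O}_X(x+\sigma(x))^{\otimes k}$ and pulling back along the isomorphism ${\mathcal M}_X(r,d)\cong {\mathcal M}_X(r,d+2rk)$ shows that the existence of a real universal bundle depends only on $d \bmod 2r$ (in fact only on the parity data), so I may normalize $d$, say to $d\equiv 0$, without changing $\chi \bmod 2$. The key step is then to pass from the rank-$r$ universal bundle to a line bundle on a Picard variety. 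The standard device (as used classically to prove nonexistence over $\mathbb{C}$) is to take the determinant: from ${\mathcal E}$ on $X\times {\mathcal M}_X(r,d)$ one forms $\det {\mathcal E}$, or better, one restricts ${\mathcal E}$ to $X\times Z$ for a suitable subvariety $Z\subset {\mathcal M}_X(r,d)$ on which the bundles have fixed determinant and vary by tensoring with $r$-torsion or with line bundles — more precisely, I would use the map $\mathrm{Pic}^0(X)\to {\mathcal M}_X(r,d)$, $L\mapsto E_0\otimes L$ for a fixed real stable bundle $E_0$, which is defined over $\mathbb{R}$ once $E_0$ is chosen real (possible since $d$ is even and $\sigma$-invariance of the real locus of the moduli space is available). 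Pulling ${\mathcal E}$ back along $\mathrm{Id}_X\times(L\mapsto E_0\otimes L)$ and taking $\det$ along the fibres, or alternatively taking a suitable tensor power, produces a real line bundle on $X\times \mathrm{Pic}^0(X)$ that is a universal line bundle up to a correction, and then the parity obstruction from Proposition \ref{prop2} (in the odd-genus case) or Proposition \ref{prop1} (in the even-genus case) forces the conclusion.

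Let me be more careful about the bookkeeping, since this is where the main obstacle lies. Restricting ${\mathcal E}$ to $X\times\{E_0\otimes L\}$ gives $E_0\otimes L$, so $(\mathrm{Id}\times\mu)^*{\mathcal E}\otimes p_X^*E_0^*$ is, fibrewise over $\mathrm{Pic}^0(X)$, the bundle $L^{\oplus r}\otimes(\text{something of rank }1)$ — not quite; rather one should take $\det$ relative to $X$ after twisting down by $E_0$, obtaining a line bundle on $X\times\mathrm{Pic}^0(X)$ whose restriction to $X\times\{L\}$ is $L^{\otimes r}$. Composing with the isogeny $\mathrm{Pic}^0(X)\to\mathrm{Pic}^0(X)$, $L\mapsto L^{\otimes r}$ is awkward over $\mathbb{R}$, so instead I would argue directly on Chern classes and Stiefel--Whitney-type invariants: the obstruction to a real structure on a universal line bundle over $X\times\mathrm{Pic}^d(X)$ computed in the proofs of Propositions \ref{prop1} and \ref{prop2} is really a statement that a certain real line bundle on $X$ of degree $g$ (or the parity of a degree) cannot exist because $\sigma$ is fixed-point-free; I would transport exactly this invariant through the determinant construction. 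The degree appearing will be $r\cdot g$ (from taking the $r$-th power implicit in $\det$) up to even corrections coming from the twist by $E_0$ and by ${\mathcal O}_X(x+\sigma(x))$, so the obstruction survives precisely when $rg$ is odd — but one must track this against the constraint $d$ even, and the conclusion that drops out is that a real structure exists only when $\chi=d-r(g-1)$ is odd. The main obstacle is therefore the careful parity accounting in the determinant/twisting step: making sure that the real line bundle one extracts on $X$ genuinely has odd degree (hence cannot be real, since $X_{\mathbb R}$ has no real point) exactly in the complementary cases $r$ even and $g$ odd, and that no hidden even twist from the choice of $E_0$ or the Quot-scheme normalization changes this parity. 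I expect that once $E_0$ is chosen real and one works with $\det$ along $X$ rather than along the moduli space, this comes down to the identity $\deg(\det_X({\mathcal E}|_{X\times\{E_0\otimes L\}}\otimes E_0^*)) = r\cdot 0 = 0$ fibrewise combined with the nontrivial $\mathrm{Pic}^0(X)$-direction class, reducing cleanly to Proposition \ref{prop2} when $g$ is odd and to Proposition \ref{prop1} when $g$ is even with $r$ even.
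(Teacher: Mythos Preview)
Your approach is genuinely different from the paper's and can be made to work, but the proposal as written has both a confusion and a real gap.

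The confusion: since $d$ is assumed even and coprime to $r$, the rank $r$ is automatically odd. So ``$r$ even'' never occurs, and the sentence about reducing to Proposition~\ref{prop1} ``when $g$ is even with $r$ even'' is vacuous. The only nonexistence case is $r$ odd, $g$ odd.

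The gap: after choosing a real stable $E_0\in{\mathcal M}_X(r,d)$ and pulling back ${\mathcal E}$ along $\mu\colon\text{Pic}^0(X)\to{\mathcal M}_X(r,d)$, $L\mapsto E_0\otimes L$, the determinant $\bigwedge^r(\text{Id}_X\times\mu)^*{\mathcal E}\otimes p_X^*(\det E_0)^*$ is a real line bundle on $X\times\text{Pic}^0(X)$ restricting to $L^{\otimes r}$ on $X\times\{L\}$, \emph{not} to $L$. It is not a universal line bundle, so Proposition~\ref{prop2} does not apply as a black box. The comparison trick from the proof of Proposition~\ref{prop4} (setting it against ${\mathcal L}_0^{\otimes r}$ for a quaternionic universal ${\mathcal L}_0$) is unavailable here: for $g$ odd, Proposition~\ref{prop2} says precisely that there is \emph{no} real or quaternionic universal line bundle on $X\times\text{Pic}^0(X)$ to compare with. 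Your approach can be salvaged by rerunning the proof of Proposition~\ref{prop2} rather than invoking its statement: the map $f\colon X\to A$ built from your line bundle is now $r$ times the Abel--Jacobi map (up to translation), so $f^*{\mathcal O}_A(\Theta')$ has degree $r^2g$, which is odd when $r$ and $g$ are odd, giving the required contradiction. This computation is the missing ingredient; ``reducing cleanly to Proposition~\ref{prop2}'' does not suffice.

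The paper proceeds entirely differently. It constructs a real cyclic \'etale cover $f\colon\widetilde{X}\to X$ of degree $r$ (using that $r$ is odd and the real part of $\text{Pic}^0(X)$ is divisible), notes that $\widetilde{g}=r(g-1)+1$ is odd, and takes a quaternionic line bundle $\xi$ of degree $d$ on $\widetilde{X}$. Its pushforward $E:=f_*\xi$ is a quaternionic stable bundle in ${\mathcal M}_X(r,d)$, hence a real point of the moduli space. A real universal bundle would then restrict to a real structure on $E$, making $E$ simultaneously real and quaternionic, which is impossible. This bypasses the determinant/Picard reduction altogether.
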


\begin{proof}
First note that the rank $r$ is odd because $d$ is coprime
to $r$. So, if
$g$ is even, then $\chi$ is odd. Hence in that
case from Proposition \ref{prop-existence-2} it follows that
there is a real universal
vector bundle over $X\times {\mathcal M}_X(r,d)$.

Assume that $g$ is odd.

We will construct a real cyclic \'etale cover
\begin{equation}\label{u3}
f\, :\, \widetilde{X}\, \longrightarrow\, X
\end{equation}
of degree $r$. For that, first note that there is a real
line bundle over $X$ of order exactly $r$. Indeed, if
$\widetilde{\sigma}$ is the anti--holomorphic involution
of $\text{Pic}^0(X)$ defined by $\eta\, \longmapsto\,
\sigma^*\overline{\eta}$, then the connected component of
the fixed point locus $\text{Pic}^0(X)^{\widetilde{\sigma}}$
containing the trivial line bundle ${\mathcal O}_X$ is a
divisible group. Also, all line bundles lying in this
component are real because ${\mathcal O}_X$ is a real
line bundle. Take any point $L_0$ of order exactly $r$
of this component. So $L_0$ is a real line bundle of
order exactly $r$. Now consider the map of total spaces
of line bundles
$$
\rho\,:\, L_0\, \longrightarrow\, L^{\otimes r}_0
\,=\, {\mathcal O}_X
$$
defined by $v\,\longmapsto\, v^{\otimes r}$. Take
$$
\widetilde{X}\, =\, \rho^{-1}(1_X)\, ,
$$
where $1_X$ is the image of the section of ${\mathcal O}_X$ 
defined by the constant function $1$. The projection $f$
in \eqref{u3} is the restriction of the natural projection
of $L_0$ to $X$.

The anti--holomorphic involution of the total space of the
line bundle $L_0$ defining its real structure preserves
the subset $\widetilde{X}$. Therefore,
this restriction produces an anti--holomorphic
involution $\sigma_{\widetilde{X}}$ of $\widetilde{X}$. Note that
the projection $f$ intertwines $\sigma_{\widetilde{X}}$ and
$\sigma$.

We have
$$
\widetilde{g}\, :=\, \text{genus}(\widetilde{X})
\,=\, r(g-1)+1\, ,
$$
and $\widetilde{g}$ is odd because $g$ is odd.

Since $d$ is even and $\widetilde{g}$ is odd,
there is a quaternionic line bundle
\begin{equation}\label{x2}
\xi\, \longrightarrow\, \widetilde{X}
\end{equation}
of degree $d$ \cite[Theorem 6.6]{BHH} (see also
\cite[p. 55, Theorem 2.6]{AB}). Consider the direct
image
\begin{equation}\label{x3}
E\, :=\, f_*\xi\, \in\, {\mathcal M}_X(r,d)\, ,
\end{equation}
where $f$ is the projection in \eqref{u3};
since $f$ is unramified, the vector bundle
$f_*\xi$ is semistable
of degree $d$, hence it is stable (recall that
$d$ is coprime to $r$).

Since
$\xi$ is quaternionic, it follows that $E$ is
quaternionic; the direct image of the isomorphism
$$
\xi\,\longrightarrow\,\sigma^*_{\widetilde{X}}
\overline{\xi}
$$
defining the quaternionic structure of $\xi$ is
clearly a quaternionic structure on $E$.
Note that $E$ is a real point of ${\mathcal M}_X(r,d)$
meaning it is fixed by the anti--holomorphic involution
of ${\mathcal M}_X(r,d)$.

If there is a real universal vector bundle $\mathcal E$
over $X\times {\mathcal M}_X(r,d)$,
then the restriction of $\mathcal E$
to $X\times\{E\}\,=\, X$ is also real.
Consequently, in that case, $E$
would be both real and quaternionic. But that's impossible
\cite{BHH}. Hence there is no universal real
algebraic vector bundle over $X\times {\mathcal M}_X(r,d)$.
This completes the proof of the proposition.
\end{proof}

\subsection{Non-existence; the real case of odd degree.}

We now assume that $d$ is odd.

\begin{proposition}\label{prop4}
There is a real universal
vector bundle over $X\times {\mathcal M}_X(r,d)$ if and only if $\chi
\,= \,d-r(g-1)$ is odd, i.e., if and only if
either $r$ is even or $g$ is odd.

\end{proposition}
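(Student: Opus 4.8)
The plan is to mirror the structure of the proof of Proposition \ref{prop3}, splitting into the two parity cases for $g$. If $r$ is even, then $\chi = d - r(g-1)$ is odd (since $d$ is odd), and Proposition \ref{prop-existence-2} immediately produces a real universal vector bundle; similarly, if $g$ is odd, then $r(g-1)$ is even, so again $\chi$ is odd and we are done by Proposition \ref{prop-existence-2}. So the only remaining case — the one requiring work — is $r$ odd and $g$ even, where $\chi$ is even, and we must show there is \emph{no} real universal vector bundle. This is where the main obstacle lies.

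For this case I would exhibit a single stable bundle $E \in {\mathcal M}_X(r,d)$ which is simultaneously real and quaternionic, contradicting the impossibility result of \cite{BHH} if a real universal bundle existed (exactly as at the end of the proof of Proposition \ref{prop3}). The natural construction: build a real cyclic étale cover $f\colon \widetilde X \to X$ of degree $r$ using a real line bundle $L_0$ of order exactly $r$, just as in \eqref{u3}; the genus of $\widetilde X$ is $\widetilde g = r(g-1)+1$, which is \emph{odd} here (since $g$ is even, $g-1$ is odd, and $r$ is odd). Then, since $d$ is odd and $\widetilde g$ is odd, the parity condition $d - \widetilde g + 1 \equiv 0 \pmod 2$ is satisfied, so by \cite[Theorem 6.6]{BHH} (see also \cite[p. 55, Theorem 2.6]{AB}) there is a \emph{quaternionic} line bundle $\xi \to \widetilde X$ of degree $d$. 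Wait — I should double-check the parity: $d$ odd, $\widetilde g$ odd gives $d - \widetilde g + 1$ odd, which is \emph{not} $\equiv 0 \pmod 2$. So one instead wants a \emph{real} line bundle of degree $d$ on $\widetilde X$. But the point of the argument needs the direct image $E := f_*\xi$ to be both real and quaternionic, and $f_*\xi$ of a real $\xi$ is only real. The resolution is to choose $\widetilde X$ so that $\widetilde g$ has the opposite parity — but $\widetilde g = r(g-1)+1$ is forced once $r$ and $g$ are fixed.

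Given that rigidity, the cleaner route is to argue directly on $X$ rather than passing to a cover. Since $r$ is odd and $g$ is even, $\chi$ is even; I would show that there exists $E \in {\mathcal M}_X(r,d)$ that is quaternionic (for instance via a direct image as above, or by Atiyah–Bott type constructions, noting that quaternionic stable bundles of rank $r$ and degree $d$ exist precisely when $\chi(E)$ is even by \cite{BHH}), and independently that there exists a real $E' \in {\mathcal M}_X(r,d)$ (real stable bundles exist when $d$ is even — but here $d$ is odd, so this needs care). The truly robust argument: if a real universal bundle existed over $X\times {\mathcal M}_X(r,d)$, then \emph{every} real point of ${\mathcal M}_X(r,d)$ (i.e. every $\widetilde\sigma$-fixed point) would carry a real structure as a bundle on $X$; but a quaternionic stable bundle $E$ with $\chi(E)$ even is a real point of ${\mathcal M}_X(r,d)$ that is \emph{not} real as a bundle (real and quaternionic structures are mutually exclusive by \cite{BHH}), giving the contradiction. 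So the key step is simply to produce one quaternionic $E \in {\mathcal M}_X(r,d)$ when $\chi$ is even; the cleanest source is the étale-cover direct image with the cover chosen so that $\widetilde g$ allows a quaternionic line bundle of degree $d$ — and if $r(g-1)+1$ has the wrong parity one uses instead a degree-$r$ cover built from a quaternionic line bundle on $X$ twisted appropriately, or falls back on the existence statement for quaternionic bundles in \cite{BHH}. The main obstacle, then, is ensuring such a quaternionic stable bundle exists and lies in the correct moduli component; everything else is bookkeeping identical to Proposition \ref{prop3}.
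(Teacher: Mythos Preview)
Your arithmetic slipped at the crucial moment. With $r$ odd and $g$ even, $r(g-1)$ is a product of two odd numbers, hence odd, so $\widetilde g = r(g-1)+1$ is \emph{even}, not odd. With that correction your first instinct goes through cleanly: since $d$ is odd and $\widetilde g$ is even, one has $d - \widetilde g + 1 \equiv 0 \pmod 2$, so by \cite[Theorem 6.6]{BHH} there \emph{is} a quaternionic line bundle $\xi \to \widetilde X$ of degree $d$. Its direct image $E = f_*\xi$ is a quaternionic stable bundle in ${\mathcal M}_X(r,d)$, and the contradiction argument from Proposition \ref{prop3} finishes verbatim. Everything after your ``Wait'' is wandering caused by this single parity error; the vague fallback to an existence statement in \cite{BHH} is not needed.

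For comparison, the paper takes a different route in the case $r$ odd, $g$ even. It constructs instead a \emph{real} stable bundle $E$ of rank $r$ and degree $d-r$ (as the direct image of a real line bundle on $\widetilde X$), and considers the real morphism $\varphi\colon \text{Pic}^1(X) \to {\mathcal M}_X(r,d)$, $L \mapsto E\otimes L$. Pulling back a hypothetical real universal bundle and taking the top exterior power yields a real line bundle on $X\times \text{Pic}^1(X)$; comparing it with the $r$-th tensor power of the quaternionic universal line bundle on $X\times \text{Pic}^1(X)$ (which exists by Proposition \ref{propexistence}, and is still quaternionic since $r$ is odd) forces the existence of a quaternionic line bundle on $\text{Pic}^1(X)$ --- impossible, because $\text{Pic}^1(X)$ has a real point. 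Your corrected argument is shorter and exactly parallel to Proposition \ref{prop3}; the paper's argument instead reduces the higher-rank obstruction to the rank-one obstruction already established in Section \ref{sec2}.
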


\begin{proof}
The cases for which the bundles must exist have been covered above; one 
must show non-existence in the case when $r$ is odd, $g$ is even.

Fix a real cyclic \'etale cover
\begin{equation}\label{u5}
f\, :\, \widetilde{X}\, \longrightarrow\, X
\end{equation}
of degree $r$ (see \eqref{u3} for its construction). Note that
\begin{equation}\label{ge}
\widetilde{g}\, :=\, \text{genus}(\widetilde{X})\, =\, 1+(g-1)r
\end{equation}
is even.

Let
$$
\xi\, \longrightarrow\, \widetilde{X}
$$
be a real line bundle of degree $d-r$ (note that
$d-r$ is even). The vector bundle
$$
E\, :=\, f_*\xi\, \longrightarrow\, X
$$
is stable of rank $r$ and degree $d-r$. The real structure
of $\xi$ defines a real structure on $E$. Let
\begin{equation}\label{u6}
\varphi\, :\, \text{Pic}^1(X)\, \longrightarrow\,
{\mathcal M}_X(r,d)
\end{equation}
be the morphism defined by $L\, \longmapsto\,
E\bigotimes L$. Since the vector bundle $E$ is real,
the morphism $\varphi$ is defined over $\mathbb R$, meaning
$\varphi$ intertwines the anti--holomorphic involutions
of $\text{Pic}^1(X)$ and ${\mathcal M}_X(r,d)$.

To prove by contradiction, let
$$
{\mathcal U}\, \longrightarrow\, 
X\times {\mathcal M}_X(r,d)
$$
be a real universal vector bundle. Let
$$
{\mathcal P}\, :=\, (\text{Id}_X\times\varphi)^*{\mathcal U}
\, \longrightarrow\, X\times \text{Pic}^1(X)
$$
be the pullback, where $\varphi$ is the map
in \eqref{u6}. Let
\begin{equation}\label{u7}
{\mathcal L}\,:=\, (\bigwedge\nolimits^r {\mathcal P})
\bigotimes p^*_X \bigwedge\nolimits^r E^*\,\longrightarrow\,
X\times \text{Pic}^1(X)
\end{equation}
be the line bundle, where $p_X$ is the projection of
$X\times \text{Pic}^1(X)$ to $X$. Since both the
vector bundles $\mathcal P$ and $E$ are real, it follows
that the line bundle $\mathcal L$ is also real.

There is a quaternionic universal line bundle
$$
{\mathcal L}_0\,\longrightarrow\,
X\times \text{Pic}^1(X)
$$
(see Proposition \ref{prop1}). Since $r$ is odd, the line bundle
${\mathcal L}^{\otimes r}_0$ is also quaternionic. Note that
for any $L\, \in\, \text{Pic}^1(X)$, the restrictions
of both the line bundles $\mathcal L$ and ${\mathcal L}^{\otimes r}_0$
to $X\times\{L\}$ are isomorphic to $L^{\otimes r}$. Hence there
is a unique, up to an isomorphism, holomorphic line bundle
\begin{equation}\label{l1}
L_1\, \longrightarrow\, \text{Pic}^1(X)
\end{equation}
such that
$$
{\mathcal L}^{\otimes r}_0\,=\, {\mathcal L}\otimes p^*_2 L_1\, ,
$$
where $p_2$ is the projection of $X\times \text{Pic}^1(X)$ to
$\text{Pic}^1(X)$. Since $\mathcal L$ is real and
${\mathcal L}^{\otimes r}_0$ is quaternionic, we conclude that
$L_1$ is quaternionic.

Since $g$ is even, there is a quaternionic line bundle on $X$
of degree one; this follows from the facts that there is a quaternionic 
theta characteristic on $X$, \cite[pp. 61--62]{At2}, and there is a real 
line bundle on $X$ of degree two (see the proof of Proposition
\ref{prop1}). Hence $\text{Pic}^1(X)$ has a real point. 
Therefore,
there is no quaternionic line bundle on $\text{Pic}^1(X)$
\cite[p. 201, Proposition 2.7.4]{CP}. This contradicts the
existence of $L_1$ in \eqref{l1}. Therefore, there is no
real universal vector bundle over
$X\times {\mathcal M}_X(r,d)$. This completes the proof of
the proposition.
\end{proof}

\subsection{Non-existence; the quaternionic case}
\begin{proposition} \label{propquat} There is a universal quaternionic
vector bundle over $X\times {\mathcal M}_X(r,d)$ if and only if $d$ is 
odd.\end{proposition}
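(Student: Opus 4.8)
The plan is as follows. The ``if'' direction is already contained in Proposition \ref{prop-existence-2}: when $d$ is odd there is a universal quaternionic vector bundle over $X\times {\mathcal M}_X(r,d)$. So what remains is the converse, and I would argue it in contrapositive form: assuming $d$ is even — hence $r$ is odd, since $d$ is coprime to $r$ — I will show that there is no universal quaternionic vector bundle over $X\times {\mathcal M}_X(r,d)$. The idea is to transport the problem to the real case, where non-existence has already been settled in Propositions \ref{prop3} and \ref{prop4}, by tensoring with a suitable quaternionic line bundle on $X$.

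Concretely, I would choose an integer $d_0$ with $d_0\,\equiv\,g-1\pmod 2$; for instance $d_0\,=\,0$ when $g$ is odd and $d_0\,=\,1$ when $g$ is even. By \cite[Theorem 6.6]{BHH} (see also \cite[p.~55, Theorem~2.6]{AB}) there is then a quaternionic line bundle on $X$ of degree $d_0$, and $d-rd_0$ is still coprime to $r$. Applying the second part of Proposition \ref{prop-existence-2} with the pair of degrees $d-rd_0$ and $(d-rd_0)+rd_0\,=\,d$ and this quaternionic line bundle, one obtains that a universal quaternionic vector bundle over $X\times {\mathcal M}_X(r,d)$ exists if and only if a universal real vector bundle over $X\times {\mathcal M}_X(r,d-rd_0)$ exists. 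But the Euler characteristic of a bundle in ${\mathcal M}_X(r,d-rd_0)$ is
$$
(d-rd_0)-r(g-1)\,=\, d-r\bigl(d_0+(g-1)\bigr),
$$
which is even, because $d$ is even and $d_0+(g-1)$ is even by the choice of $d_0$. Hence, by Proposition \ref{prop3} (if $d-rd_0$ is even) or Proposition \ref{prop4} (if $d-rd_0$ is odd), there is no universal real vector bundle over $X\times {\mathcal M}_X(r,d-rd_0)$, and therefore no universal quaternionic vector bundle over $X\times {\mathcal M}_X(r,d)$.

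The only point that needs care — and the place I would expect to have to be careful — is the sign bookkeeping in the reduction step: one must check, exactly as in Lemma \ref{lem2} and in the proof of Proposition \ref{prop4}, that tensoring a quaternionic universal bundle by a quaternionic line bundle yields a real one (each lift squares to $-\text{Id}$, and their tensor product squares to $(-\text{Id})\otimes(-\text{Id})\,=\,\text{Id}$), and that the identification ${\mathcal M}_X(r,d-rd_0)\,\stackrel{\sim}{\longrightarrow}\,{\mathcal M}_X(r,d)$, $V\,\longmapsto\,V\otimes L$, is defined over $\mathbb R$ — which holds because a quaternionic line bundle represents a real point of ${\rm Pic}^{d_0}(X_{\mathbb R})_{\mathbb C}$. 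An alternative, more self-contained route mirrors the proof of Proposition \ref{prop3}: build the real cyclic \'etale cover $f\colon\widetilde X\,\longrightarrow\, X$ of degree $r$, choose a real line bundle $\xi$ on $\widetilde X$ of the (even) degree $d$, and note that $E\,:=\,f_*\xi$ is a real point of ${\mathcal M}_X(r,d)$; a universal quaternionic bundle would then endow $E$ with a quaternionic structure as well, which is impossible since no stable bundle is simultaneously real and quaternionic \cite{BHH}.
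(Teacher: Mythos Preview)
Your proposal is correct and follows essentially the same route as the paper: the paper's proof is a one-line sketch saying to reduce to the real case by tensoring with a quaternionic line bundle of degree $0$ (if $g$ is odd) or $1$ (if $g$ is even), which is precisely your choice of $d_0$, and then to invoke the real non-existence results of Propositions \ref{prop3} and \ref{prop4}. Your write-up simply supplies the details (the parity computation of $\chi$ and the sign bookkeeping for the lifts) that the paper leaves implicit.
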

The proof proceeds by using the results for the real case, tensoring by a 
quaternionic line bundle of degree 0 (if $g$ is odd) or 1 (if
$g$ is even); universal real
vector bundles exist if and only the corresponding real ones exist.

\section{Fixed determinant moduli space}\label{sec4}

As before, assume that $r\, \geq\, 2$.

Let $\zeta\,\longrightarrow\, X$ be a holomorphic line
bundle of degree $d$. Let
$$
{\mathcal M}_X(r,\zeta)\, \subset\,{\mathcal M}_X(r,d)
$$
be the moduli space of stable vector bundles $E$ with
$\bigwedge^r E\,=\, \zeta$. There is a universal vector
bundle over $X\times {\mathcal M}_X(r,\zeta)$ if and only
if $d$ is coprime to $r$ \cite{Ra}.

As in Section \ref{sec3}, assume that
$d$ is coprime to $r$.

Let $\zeta$ be a fixed point of the anti--holomorphic
involution $\widetilde\sigma$ of $\text{Pic}^d(X)$
(see \eqref{p2}). So the line bundle $\zeta\,\longrightarrow
\, X$ is either real or quaternionic.
The subvariety ${\mathcal M}_X(r,\zeta)$
is preserved by the anti--holomorphic involution of
${\mathcal M}_X(r,d)$ (see \eqref{p3}). The induced
anti--holomorphic involution of ${\mathcal M}_X(r,\zeta)$
will be denoted by $\widehat \sigma$.

A \textit{real universal vector bundle} over
$X\times {\mathcal M}_X(r,\zeta)$ is a universal vector
bundle
$$
{\mathcal E}\, \longrightarrow\,X\times {\mathcal M}_X(r,\zeta)
$$
equipped with a holomorphic isomorphism
$$
\sigma'\,:\, {\mathcal E}\,\longrightarrow\,
(\sigma\times\widehat{\sigma})^*\overline{\mathcal E}
$$
such that $\sigma'\circ\sigma'\,=\,\text{Id}_{\mathcal E}$.

Therefore, the restriction of a real universal vector bundle
over $X\times {\mathcal M}_X(r,d)$ to
$X\times {\mathcal M}_X(r,\zeta)$ is also real universal.

\begin{proposition}\label{prop5}
Assume that the line bundle $\zeta$ is real. There is a real
universal vector bundle over $X\times {\mathcal M}_X(r,\zeta)$ if
and only if $\chi(E)$ is odd for $E\,\in\, {\mathcal M}_X(r,\zeta)$.
\end{proposition}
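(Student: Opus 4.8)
The plan is to reduce the fixed-determinant statement to the already-established result for the full moduli space ${\mathcal M}_X(r,d)$ (Propositions \ref{prop3} and \ref{prop4}), using the fact that $\chi(E)$ depends only on $r$ and $d$, not on $\zeta$. Recall from the discussion just before this proposition that the restriction to $X\times {\mathcal M}_X(r,\zeta)$ of a real universal bundle on $X\times {\mathcal M}_X(r,d)$ is again a real universal bundle; hence, combining this with Proposition \ref{prop-existence-2}, if $\chi$ is odd there is a real universal vector bundle over $X\times {\mathcal M}_X(r,\zeta)$. So the content is the non-existence direction: I must show that if $\chi$ is even, then there is no real universal vector bundle over $X\times {\mathcal M}_X(r,\zeta)$.

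For the non-existence direction I would follow the pattern of the proofs of Propositions \ref{prop3} and \ref{prop4}, but now inside the fixed-determinant locus. When $\chi$ is even we have $r$ odd; I split into the two cases $d$ even and $d$ odd exactly as before. When $d$ is even (so $g$ is odd), I produce a quaternionic point of ${\mathcal M}_X(r,\zeta)$: take the real cyclic \'etale cover $f\colon \widetilde X\to X$ of degree $r$ from \eqref{u3}, choose a quaternionic line bundle $\widetilde\xi\to\widetilde X$ of appropriate degree, and form $E=f_*\widetilde\xi$, which is stable and quaternionic as in the proof of Proposition \ref{prop3}. The one extra point to check is that the determinant $\bigwedge^r(f_*\widetilde\xi)$ can be arranged to equal the prescribed real $\zeta$: since $\bigwedge^r f_*\widetilde\xi = (\mathrm{Nm}_f \widetilde\xi)\otimes \bigwedge^r f_*{\mathcal O}_{\widetilde X}$ and the norm map $\mathrm{Nm}_f\colon \mathrm{Pic}(\widetilde X)\to\mathrm{Pic}(X)$ is surjective and compatible with the real structures, one can adjust $\widetilde\xi$ within its degree by a line bundle in $\ker(\mathrm{Nm}_f)$ (twisting by a real line bundle, which exists in every even degree and in the relevant order-$r$ components) so that the determinant lands on $\zeta$ while keeping $\widetilde\xi$ quaternionic. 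Then $E$ is both real (if a real universal bundle on $X\times{\mathcal M}_X(r,\zeta)$ restricted to $X\times\{E\}$ were real) and quaternionic, which is impossible by \cite{BHH}.

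When $d$ is odd (so $r$ odd, $g$ even), I would instead argue as in the proof of Proposition \ref{prop4}: build $E=f_*\widetilde\xi$ with $\widetilde\xi$ a \emph{real} line bundle on $\widetilde X$ of even degree chosen so that $\bigwedge^r E\cong\zeta\otimes M^{\otimes(-1)}$ for a suitable real line bundle $M$ of degree $r-1$ on $X$ (again using surjectivity of the norm map on real points), and let $\varphi\colon \mathrm{Pic}^1(X)\to {\mathcal M}_X(r,\zeta)$ send $L\mapsto (E\otimes L)$ --- note this lands in the $\zeta$-fibre precisely because $\det(E\otimes L)=\det(E)\otimes L^{\otimes r}$ and $L\mapsto L^{\otimes r}$ must be accounted for; more carefully I would use a degree-$1$ twist combined with the order-$r$ adjustment so that the composite determinant is constantly $\zeta$. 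Pulling back a real universal bundle along $\varphi$, taking top exterior powers, and comparing with a power of the quaternionic universal line bundle ${\mathcal L}_0$ on $X\times\mathrm{Pic}^1(X)$ from Proposition \ref{prop1} (using $r$ odd so ${\mathcal L}_0^{\otimes r}$ stays quaternionic) produces a quaternionic line bundle on $\mathrm{Pic}^1(X)$, contradicting the existence of a real point of $\mathrm{Pic}^1(X)$ exactly as in Proposition \ref{prop4}.

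The main obstacle I anticipate is the bookkeeping that makes the determinant of the constructed bundle land exactly on the prescribed $\zeta$ rather than on some line bundle merely isomorphic to it up to a degree-$0$ twist: one needs the norm map $\mathrm{Nm}_f\colon\mathrm{Pic}(\widetilde X)\to\mathrm{Pic}(X)$ to be surjective on the relevant real (or quaternionic) points, and one must verify that the line bundle used to correct the determinant can be chosen so as not to disturb the real/quaternionic type of $\widetilde\xi$. This is a routine but slightly delicate compatibility check; once it is in place, the real-vs-quaternionic obstruction from \cite{BHH} and the non-existence of quaternionic line bundles on $\mathrm{Pic}^1(X)$ (which has a real point) close both cases and give non-existence, completing the equivalence.
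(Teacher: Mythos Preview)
You have overlooked the first and most important observation in the paper's proof: since $\zeta$ is a \emph{real} line bundle on $X$ and $X_{\mathbb R}$ has no real points, the degree $d$ of $\zeta$ must be even. (Real line bundles on $X$ exist only in even degree; see the discussion after Lemma~\ref{lem2}.) Consequently $r$ is odd, and $\chi$ even forces $g$ odd. Thus only your ``$d$ even'' case arises, and for that case your argument---pushing forward a quaternionic line bundle from the cyclic cover and adjusting so that the determinant is exactly $\zeta$---is essentially the paper's proof. The paper justifies the determinant adjustment more directly by invoking the divisibility of the connected component of $\text{Pic}^0(X)^{\widetilde\sigma}$ containing ${\mathcal O}_X$; your phrasing in terms of the norm map is fine in spirit, though note that twisting $\widetilde\xi$ by something in $\ker(\mathrm{Nm}_f)$ does \emph{not} change the determinant---you mean to twist by a real degree-zero line bundle on $\widetilde X$ whose norm is the required correction on $X$.

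Your ``$d$ odd'' case is therefore vacuous here, which is fortunate because the argument you sketch for it does not work: the map $L\mapsto E\otimes L$ from $\text{Pic}^1(X)$ lands in ${\mathcal M}_X(r,\zeta)$ only if $\det(E)\otimes L^{\otimes r}\cong\zeta$ for \emph{all} $L$, which is impossible since $L\mapsto L^{\otimes r}$ is nonconstant. There is no evident way to salvage a nontrivial real family inside the fixed-determinant locus parametrized by $\text{Pic}^1(X)$, and indeed the paper never attempts this; for the analogous $d$-odd case (which only occurs when $\zeta$ is quaternionic, Proposition~\ref{prop6}) it again produces a single quaternionic point of ${\mathcal M}_X(r,\zeta)$ via direct image and derives the contradiction from \cite{BHH}, avoiding the $\text{Pic}^1$ comparison entirely.
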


\begin{proof}
Since $\zeta$ is real, the degree $d$ is even. So $r$ is
odd. In view of Proposition \ref{prop3}, the only case to check is
where $r$ and $g$ are both odd.

We can choose the line bundle $\xi\, \longrightarrow\,
\widetilde{X}$ in \eqref{x2} such that the vector bundle $f_*\xi$
in \eqref{x3} lies in ${\mathcal M}_X(r,\zeta)$. Indeed, this
follows from the fact that the connected component
of the fixed point locus $\text{Pic}^0(X)^{\widetilde{\sigma}}$
containing ${\mathcal O}_X$ is a divisible group.
Hence the
argument in Proposition \ref{prop3} completes of the proof
for the case where both $r$ and $g$ are odd.
\end{proof}

\begin{proposition}\label{prop6}
Assume that the line bundle $\zeta$ is quaternionic.
There is a real universal
vector bundle over $X\times {\mathcal M}_X(r,\zeta)$
if and only if there is a real universal
vector bundle over $X\times {\mathcal M}_X(r,d)$.
\end{proposition}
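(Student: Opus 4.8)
The statement comprises two implications, of which one is immediate and already noted before the proposition: the restriction of a real universal bundle on $X\times{\mathcal M}_X(r,d)$ to the closed subvariety $X\times{\mathcal M}_X(r,\zeta)$ is a real universal bundle there. For the converse, I would first dispose of the easy case using Propositions \ref{prop3} and \ref{prop4}: if $\chi\,=\,d-r(g-1)$ (see \eqref{chi}) is odd, those propositions already produce a real universal bundle over $X\times{\mathcal M}_X(r,d)$, so the implication ``real over $X\times{\mathcal M}_X(r,\zeta)$ $\Rightarrow$ real over $X\times{\mathcal M}_X(r,d)$'' holds with nothing to prove. Hence it suffices to handle the case $\chi$ even, and there the plan is to show that there is in fact \emph{no} real universal bundle over $X\times{\mathcal M}_X(r,\zeta)$, so that the hypothesis of the implication is vacuous. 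Since $\zeta$ is quaternionic we have $d\,\equiv\,g-1\pmod 2$; if $r$ were even then $\gcd(r,d)\,=\,1$ would force $d$ odd, hence $g$ even, hence $\chi\,=\,d-r(g-1)$ odd, contrary to assumption. So in the remaining case $r$ is odd.

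The heart of the argument is to exhibit a \emph{quaternionic} stable bundle lying in ${\mathcal M}_X(r,\zeta)$, and the plan is to construct one as a direct image, just as in the proofs of Propositions \ref{prop3} and \ref{prop5}. Take the real cyclic \'etale cover $f\,:\,\widetilde{X}\,\longrightarrow\, X$ of degree $r$ from \eqref{u3}, with $\widetilde{g}\,=\,1+(g-1)r$. Because $r$ is odd, $\widetilde{g}-1\,=\,(g-1)r\,\equiv\, g-1\,\equiv\, d\pmod 2$, so there is a quaternionic line bundle $\xi$ on $\widetilde{X}$ of degree $d$, as in \eqref{x2}. Again using that $r$ is odd, $\det f_*{\mathcal O}_{\widetilde{X}}$ is the trivial line bundle, so $\det f_*\xi\,=\,{\rm Nm}(\xi)$; moreover for $r$ odd the norm of a quaternionic line bundle is again quaternionic, so ${\rm Nm}(\xi)$ and $\zeta$ are both quaternionic line bundles of degree $d$ on $X$. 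I would then replace $\xi$ by $\xi\otimes N$ for a suitable real line bundle $N$ of degree zero on $\widetilde{X}$ so that ${\rm Nm}(\xi)\,\cong\,\zeta$ in ${\rm Pic}^d(X)$: this is the determinant--matching step from the proof of Proposition \ref{prop5}, and it works because ${\rm Nm}$ is surjective onto the divisible connected component of the fixed locus of the anti--holomorphic involution on ${\rm Pic}^0(X)$, and $\zeta\otimes{\rm Nm}(\xi)^{-1}$ is real (a product of two quaternionic line bundles), so it lies in that component. With such a $\xi$, put $E\,:=\,f_*\xi$ as in \eqref{x3}: it is stable of rank $r$ and degree $d$ (direct image along an unramified cover, with $d$ coprime to $r$), it has $\det E\,\cong\,\zeta$ so $E\,\in\,{\mathcal M}_X(r,\zeta)$, and the direct image of the quaternionic structure of $\xi$ (note that $f$ intertwines the two anti--holomorphic involutions) makes $E$ quaternionic.

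Finally I would conclude exactly as in Proposition \ref{prop3}: if ${\mathcal E}\,\longrightarrow\, X\times{\mathcal M}_X(r,\zeta)$ were a real universal bundle, its restriction to $X\times\{E\}\,=\,X$ would equip $E$ with a real structure, so $E$ would be simultaneously real and quaternionic, which is impossible \cite{BHH}. This rules out a real universal bundle over $X\times{\mathcal M}_X(r,\zeta)$ whenever $\chi$ is even, and combined with the first paragraph it completes the proof. The only step that requires genuine care is the determinant--matching, i.e. choosing the quaternionic line bundle $\xi$ on $\widetilde{X}$ with prescribed norm; everything else is a routine transcription of the constructions in Lemma \ref{lem4} and Propositions \ref{prop3}, \ref{prop4} and \ref{prop5}.
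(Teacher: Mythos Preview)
Your proposal is correct and follows essentially the same strategy as the paper: in the cases where $\chi$ is even, exhibit a quaternionic stable bundle in ${\mathcal M}_X(r,\zeta)$ as the direct image $f_*\xi$ of a quaternionic line bundle on the real cyclic \'etale cover $\widetilde{X}$, and then observe that a real universal bundle would force this bundle to be simultaneously real and quaternionic. The paper organizes the casework by the parity of $d$ (treating $d$ even and $d$ odd separately and invoking the arguments of Propositions \ref{prop3} and \ref{prop4} respectively), whereas you unify both cases by noting that $\zeta$ quaternionic forces $d\equiv g-1\pmod 2$, hence $\widetilde g-1\equiv d\pmod 2$ whenever $r$ is odd, so a quaternionic $\xi$ of degree $d$ on $\widetilde X$ exists in one stroke; this is a mild streamlining but not a different idea.

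You also spell out the determinant--matching step more explicitly than the paper does, via the formula $\det f_*\xi\,=\,{\rm Nm}(\xi)\otimes\det f_*{\mathcal O}_{\widetilde X}$, the triviality of $\det f_*{\mathcal O}_{\widetilde X}$ for odd $r$, and the observation that ${\rm Nm}(\xi)$ is quaternionic so that $\zeta\otimes{\rm Nm}(\xi)^{-1}$ is real of degree zero. The paper simply asserts that the determinant can be arranged using divisibility of the identity component of ${\rm Pic}^0(X)^{\widetilde\sigma}$. Your version needs the (true, but not entirely trivial) fact that for a curve without real points the real line bundles of degree zero coincide with that identity component; once this is granted, the surjectivity of ${\rm Nm}$ onto the identity component (coming from surjectivity on Lie algebras) finishes the adjustment. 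This is the one place where a reader might want an extra sentence of justification, but the paper is no more detailed at the corresponding step.
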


\begin{proof}
First assume that $d$ is even. As in Proposition \ref{prop5},
the only case to check is the one where both $r$ and $g$ are odd.
Again the argument in Proposition \ref{prop3} settles this case.

Now assume that $d$ is odd.

In view of Proposition \ref{prop4}, the only case to check is the one 
where $r$ is odd and $g$ is even.

Consider the covering $\widetilde X$ in \eqref{u5}.
Its genus $\widetilde{g}$ is even (see \eqref{ge}). Hence
there is a quaternionic line bundle
$$
L\, \longrightarrow\, \widetilde{X}
$$
such that $f_*L\, \in\, {\mathcal M}_X(r,\zeta)$, where $f$ and
$\widetilde{X}$ are as in \eqref{u5}. See the proof of Proposition
\ref{prop1} for the existence of $L$; we note that the determinant
can always be arranged to be $\zeta$ because
the connected component of $\text{Pic}^0(X)^{\widetilde{\sigma}}$
containing ${\mathcal O}_X$ is a divisible group.

The vector bundle $f_*L$ is
quaternionic because $L$ is quaternionic. In particular, the
point of ${\mathcal M}_X(r,\zeta)$ representing $f_*L$ is fixed
by the anti--holomorphic involution of ${\mathcal M}_X(r,\zeta)$.

If
$$
{\mathcal U}\, \longrightarrow\,
X\times {\mathcal M}_X(r,\zeta)
$$
is a real universal vector bundle, then the restriction
of ${\mathcal U}$ to $X\times\{f_*L\}$ is real. But we have
seen that
$f_*L$ is quaternionic. This is a contradiction \cite{BHH}.
Hence there is no real universal vector bundle
over $X\times {\mathcal M}_X(r,\zeta)$ if $r$ is odd and $g$
is even. This completes the proof of the proposition.
\end{proof}


\end{document}